\documentclass[11pt]{amsart}

\usepackage[letterpaper,margin=1in]{geometry}

\usepackage[T1]{fontenc}
\usepackage{lmodern}
\usepackage{microtype}
\usepackage{amsmath,amssymb,amsthm,mathtools}
\usepackage{mathrsfs}
\usepackage{enumitem}
\usepackage{booktabs,tabularx,array}
\usepackage[colorlinks=true,linkcolor=blue,citecolor=blue,urlcolor=blue]{hyperref}
\usepackage{aliascnt}
\usepackage[nameinlink,noabbrev]{cleveref}
\hypersetup{
  pdftitle={A simple C*-algebra which is not K1-injective},
  pdfauthor={Andrew S. Toms},
  pdfsubject={Higher-rank matrix trapping, spin bordism, and simple AH algebras}
}

\newtheorem{theorem}{Theorem}[section]
\newaliascnt{proposition}{theorem}
\newtheorem{proposition}[proposition]{Proposition}
\aliascntresetthe{proposition}
\newaliascnt{lemma}{theorem}
\newtheorem{lemma}[lemma]{Lemma}
\aliascntresetthe{lemma}
\newaliascnt{corollary}{theorem}
\newtheorem{corollary}[corollary]{Corollary}
\aliascntresetthe{corollary}
\theoremstyle{definition}
\newaliascnt{definition}{theorem}
\newtheorem{definition}[definition]{Definition}
\aliascntresetthe{definition}
\newaliascnt{construction}{theorem}
\newtheorem{construction}[construction]{Construction}
\aliascntresetthe{construction}
\theoremstyle{remark}
\newaliascnt{remark}{theorem}
\newtheorem{remark}[remark]{Remark}
\aliascntresetthe{remark}

\crefname{theorem}{theorem}{theorems}
\crefname{proposition}{proposition}{propositions}
\crefname{lemma}{lemma}{lemmas}
\crefname{corollary}{corollary}{corollaries}
\crefname{definition}{definition}{definitions}
\crefname{construction}{construction}{constructions}
\crefname{remark}{remark}{remarks}

\newcommand{\C}{\mathbb C}
\newcommand{\R}{\mathbb R}
\newcommand{\Z}{\mathbb Z}

\newcommand{\U}{\mathcal U}
\newcommand{\Uzero}{\mathcal U_0}
\newcommand{\End}{\operatorname{End}}
\newcommand{\Hom}{\operatorname{Hom}}
\newcommand{\rank}{\operatorname{rank}}
\newcommand{\Gr}{\operatorname{Gr}}
\newcommand{\pr}{\operatorname{pr}}
\newcommand{\id}{\mathrm{id}}

\newcommand{\Spin}{\mathrm{Spin}}
\newcommand{\red}{\operatorname{red}}

\newcommand{\thetaC}{\underline{\C}}

\newcommand{\wt}{\widetilde}

\newcommand{\KO}{\mathrm{KO}}
\newcommand{\KKO}{\mathrm{KKO}}
\newcommand{\pt}{\mathrm{pt}}

\title{A simple C$^*$-algebra which is not $K_1$-injective}
\author{Andrew S. Toms}
\date{Draft of \today}

\begin{document}

\begin{abstract}
We exhibit a simple unital infinite-dimensional C$^*$-algebra $A$ which is not $K_1$-injective, i.e., the canonical map
\[
 \U(A)/\Uzero(A)\longrightarrow K_1(A)
\]
is not injective.  This phenomenon is well known in the non-simple case.  Indeed, a natural example is the generator $u$ of $\pi_4(U(2))\cong\Z/2$, and we use exactly this example as the seed of our construction.  The construction proceeds by developing a higher-rank matrix trapping technique in homogeneous algebras generalizing the machinery of Villadsen in higher stable rank simple C$^*$-algebras.  A hypothetical homotopy from the homomorphic image of $u$
to the identity in a certain homogeneous algebra is shown, through matrix trapping, to yield a spin bordism from the class of $u$ to that of the identity matrix $\mathrm{1}_2 \in M_2$, and this, in turn, is obstructed by the fact that the bordism class of $u$ is not trivial.  We then fit our new arguments into a generalized version of Villadsen's higher stable rank construction to arrive at a simple example.

\end{abstract}

\maketitle

\section{Introduction}

For a unital $C^*$-algebra $A$ there is a canonical homomorphism
\begin{equation}\label{eq:canonical-K1-map-intro}
 \kappa_A:\U(A)/\Uzero(A)\longrightarrow K_1(A),
\end{equation}
where $\Uzero(A)$ denotes the path component of the identity $1_A \in \U(A)$.  We say that $A$
is \emph{$K_1$-injective} if $\kappa_A$ is injective, so that a unitary
which becomes null-homotopic after passage to a matrix algebra is already
null-homotopic in $\U(A)$.  The question of whether every simple C$^*$-algebra is $K_1$-injective dates to at least 1998 in the second edition of Blackadar's book \cite{Blackadar98}.  There, the question is actually whether $\kappa_A$ is {\it bijective}, but the question of surjectivity was answered negatively by Villadsen in 2002 \cite{Villadsen02}.   

Several familiar regularity hypotheses entail $K_1$-injectivity.  Stable rank one implies it via Rieffel's stable-rank theory \cite{Rieffel83}, \cite{Rieffel87}.  Jiang proved that
$\mathcal Z$-stable algebras are $K$-stable, and hence $K_1$-injective, in
\cite{Jiang97} (see also Hua \cite{Hua24}).  Other classical cases are purely infinite simple algebras
(Cuntz, \cite{Cuntz81}) and algebras of real rank zero
(Lin, \cite[Lemma~2.2]{Lin96}).  Villadsen showed, in a different direction,
that simple AH algebras can retain striking nonstable phenomena, including non-minimal stable
rank and failure of $K_1$-surjectivity \cite{Villadsen99,Villadsen02}.  Whether simplicity
alone forces $K_1$-injectivity has nevertheless remained open and appears as Problem LIX in
\cite{STW99Problems}.  The purpose of this paper is to give a negative answer to that question.

\begin{theorem}\label{thm:main-intro}
There exists a simple, separable, unital, infinite-dimensional, nuclear AH algebra $A$ such
that
\[
 K_1(A)=0
 \qquad\text{and}\qquad
 \U(A)/\Uzero(A)
\]
contains a nonzero element of order two.  In particular, $A$ is not $K_1$-injective.
\end{theorem}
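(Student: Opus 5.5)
The plan is to build $A$ as an inductive limit $A=\varinjlim (A_i,\phi_i)$ of homogeneous algebras $A_i = p_i\,(C(X_i)\otimes \mathcal K)\,p_i$ with Villadsen-type connecting maps, seeded by the generator $u\in\pi_4(U(2))\cong\Z/2$. For $A_1$ take $C(S^4)\otimes M_2$ with distinguished unitary $u_1=u$. Each $\phi_i$ will be a direct sum of two kinds of maps. The first kind consists of coordinate-projection maps $f\mapsto f\circ\pi_j$ composed with tensoring by a Bott-type or spin-bundle projection over a product of high-dimensional spheres (or $\C P^n$-like spaces), as in Villadsen's higher stable rank construction. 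These carry $u_i$ to $u_{i+1}$ while trapping it inside a corner twisted by a bundle with a nonvanishing characteristic class. The second kind consists of a small number of point evaluations, which give simplicity by a standard density argument (the evaluation points are chosen dense in each $X_i$). Separability, unitality, nuclearity and infinite dimensionality are then automatic for an AH algebra.

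To obtain $K_1(A)=0$, I will choose the spaces and maps so that $K_1(A_i)$ is finitely generated and every class is killed further along. Concretely, the image of $[u]$ in $K_1$ already vanishes in $K_1(C(S^4)\otimes M_n)$ for $n\ge 3$, since $\pi_4(U(n))=0$ there. Every other $K_1$-class coming from odd spheres is killed by interleaving point evaluations and by tensoring with spaces whose $K_1$ contributions cancel under the connecting maps; Villadsen uses $K^1=0$ spaces such as even spheres and $\C P^n$ for exactly this purpose. So the $K$-theory side is routine bookkeeping.

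The heart of the argument is showing that the image of $u$ is not in $\Uzero(A)$. By a standard perturbation argument, it suffices to show that $\phi_{i,1}(u)$ is not homotopic to $1$ in $\U(A_i)$ for every $i$. Suppose a homotopy exists. The matrix trapping step uses the twisted corner structure: the trapping bundle has Euler/Chern classes that cannot be compressed. Taking transversal preimages of a regular value, as in Villadsen's argument, I will then extract from the homotopy a compact framed, and hence spin, manifold $W$ of dimension $5$ with a map to $U(2)$. Its boundary is $S^4$ mapped by $u$, disjoint from a manifold mapped by the constant $1_2$. This gives a bordism in $\Omega^{\Spin}_4(U(2))$, or more precisely in its reduced part, between $[S^4,u]$ and a trivial class. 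I then compute that the reduced spin bordism class of $u$ is nonzero. Since $\Omega^{\Spin}_*$ agrees with $\KO$-theory in low degrees, this amounts to detecting the $\Z/2$ in $\pi_4(U(2))\cong\pi_4(S^3)$ by the $\KO$-degree, equivalently by the stable $\eta$-type invariant $\pi_4^s(S^3)\cong\Z/2$ under the Atiyah--Bott--Shapiro orientation. That gives a contradiction.

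I expect the main obstacle to be the trapping step. The hypothetical homotopy lives in a large matrix algebra over a high-dimensional base, and I must cut it down to a genuine spin bordism of maps into $U(2)$ without the extra matrix room (which kills $u$ in $K_1$) destroying the information. My first attempt will be to choose the twisting projections to be spinor bundles of Clifford modules. Then a rank-counting or index argument (a Dirac/$\KO$-index pairing) should show that any unitary path must act on a rank-$2$ subbundle over a spin submanifold, which is where the higher-rank generalization of Villadsen's one-dimensional trapping is needed.
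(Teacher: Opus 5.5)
\textbf{There is a genuine gap: the trapping step is missing.} Your outer architecture matches the paper: a Villadsen-type AH limit, dense point evaluations for simplicity, the perturbation argument pushing a null-homotopy back to a finite stage, and detection of $\eta$ in $\wt\Omega_4^{\Spin}(S^3)\cong\Omega_1^{\Spin}(\pt)\cong\Z/2$. But the step you defer as ``the main obstacle'' is essentially the whole proof, and the proposal does not supply it. ``Transversal preimages of a regular value'' and ``a rank-counting or index argument should show'' do not say which locus is trapped, why it is a closed $4$-manifold, where its spin structure comes from, or why the protected block restricts to a map into $U(2)$.

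The missing idea is to keep the protected $\C^2$ globally trivial and put all the twisting in the complement, $E=\thetaC^2\oplus Q$ over $S^4\times X$ with $X$ spin and simply connected.
\begin{itemize}
\item For a unitary section, the block-diagonal locus is exactly the zero set of the off-diagonal block $b\in\Gamma(\Hom(Q,\thetaC^2))$, since unitarity forces the other block to vanish there.
\item The dimension constraint $\dim_\R X=4\rank Q$ makes this locus a closed $4$-manifold for a transverse representative.
\item Its normal bundle is $W_\R$ with $W=Q^*\oplus Q^*$. This bundle is spin because $c_1(W)=2c_1(Q^*)$, but it is not trivial. So the locus is spin via the two-out-of-three rule, not framed as you assert.
\item A homotopy made transverse relative to its endpoints gives a $5$-dimensional spin bordism carrying the protected $2\times2$ block, retracted onto $SU(2)$.
\item The $w$-end of this bordism is not one copy of $(S^4,u)$. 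It is a signed union of copies with count $\langle c_r(Q)^2,[X]\rangle$. The obstruction is therefore this integer times $[S^4,u]_{\red}$, and you must keep it odd at every stage.
\end{itemize}

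\textbf{Several of your concrete choices would defeat this.}
\begin{itemize}
\item \emph{Products of spheres.} On a stably parallelizable $X$ all Wu classes vanish, so $\langle y^2,[X]\rangle\equiv\langle v_{2r}\cup y,[X]\rangle=0\pmod 2$ for every $y\in H^{2r}(X;\Z)$. The trapped count is then always even and the class vanishes. The paper instead uses $\Gr_2(\C^{2d+2})$, which is simply connected and spin with $\langle c_2(\zeta)^{2d},[\Gr_2(\C^{2d+2})]\rangle=1$.
\item \emph{Constant point evaluations.} With the protected block on a trivial $\thetaC^2$, a constant evaluation block $w_i(z)$ on $E_i|_z\cong\C^{N_i}$ is homotopic to the identity. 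It contributes a trivial summand of rank $N_i\ge2$, so $\phi_i(w_i)\sim(u\oplus1_2)\oplus1$, which is null-homotopic because $u\oplus1_2\sim1$ in $U(4)$. The paper tensors each evaluation block with the tautological bundle of a fresh Grassmannian factor. This is exactly what preserves $\dim_\R X_{i+1}=4\rank Q_{i+1}$ and the characteristic number one, while still giving full images.
\item \emph{Twisting the protected block.} If you tensor the protected block itself with a projection of rank greater than one, as you propose, its restriction to a trapping locus is a section of a nontrivial bundle of unitary groups, not a map to $U(2)$. There is then no class in $\wt\Omega_4^{\Spin}(U(2))$ to compare.
\item \emph{The $K_1$ bookkeeping.} Point evaluations induce zero on $K_1$, so they cannot cancel classes carried by the coordinate summands. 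None of this is needed anyway: with only even-cell spaces, $K_1(A_i)=0$ outright.
\end{itemize}
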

\noindent
We would like to emphasize that while this result is negative in character, the matrix-trapping techniques we develop here should have further broad applications for questions which ask generally ``What phenomena in commutative or homogeneous C$^*$-algebras can persist in a simple C$^*$-algebra?''

Our starting point is the nonzero element of
$\pi_4(U(2))\cong\pi_4(SU(2))\cong\Z/2$, represented by a smooth map
$u:S^4\to SU(2)$.  This map is not null-homotopic in $U(2)$, but
$u\oplus1_2$ is null-homotopic in $U(4)$, so $M_2(C(S^4))$ already fails to be
$K_1$-injective.  The difficulty is to preserve this failure while adding the
matrix summands needed for simplicity.

Our construction follows the broad outlines of Villadsen's higher stable rank examples to arrive at a simple algebra
\cite{Villadsen98,Villadsen99}, but two further ingredients are needed here.
Villadsen's work on higher stable rank AH algebras introduces the idea of trapping a rank one corner of a homogeneous algebra with spectrum $M \times X$, where $M$ and $X$ are carefully chosen manifolds.  He shows that the corner is generally left stable over a certain locus $N$ for which the standard co-ordinate projection $\pi_M:N \to M$ induces an injective map on cohomology---$N$ sees all of $M$.  Our first significant innovation here is to generalize the corner trapping idea to higher rank corners.  This fits into a recent pattern of research in operator algebras emphasizing matrix geometry.  Examples include the author's construction of a simple nuclear C$^*$-algebra without uniform property $\Gamma$ and the proof of Elliott, Li, and Niu that a simple separable C$^*$-algebra need not be singly generated \cite{Toms26Gamma}, \cite{ElliottLiNiu26}.  Higher-rank corner trapping forces a $2\times2$ corner to decouple on a
geometric locus.  A second ingredient, spin bordism, supplies a homotopy-invariant
class which distinguishes the chosen unitary from the identity
even when the trapping locus changes topology.  The sequel uses bordism at a level which is presumably fairly elementary for an expert but represents an unfamiliar field for most operator algebraists.  We have therefore written the sequel with more detail than such an expert might in order to make it accessible to its target audience. 

We now give a coarse outline of the proof.  Any undefined or unfamiliar terms will be addressed carefully in later sections.  We mean only to give the flavor here.  Ultimately, our algebra $A$ is AH, so that $A \cong \lim (A_i,\phi_i)$ for homogeneous algebras $A_i$ and unital $*$-homomorphisms $\phi_i:A_i \to A_{i+1}$.  At a single stage, let $X$ be a closed, simply connected spin manifold of
dimension $4r$, and let $Q\to X$ be a Hermitian bundle of complex rank $r$.
Put
\[
 B=S^4\times X,\qquad E=\thetaC^2\oplus Q,\qquad
 W=\Hom(Q,\thetaC^2)\cong Q^*\oplus Q^*,
\]
with bundles pulled back from $X$.  The reader should think of the homogeneous algebra $\Gamma(\End(E))$ of sections the endomorphism bundle of $E$ as the first stage algebra $A_1$.  The upper-right block of a unitary section
$v$ is a section of $W$.  At a zero of this section the other off-diagonal
block also vanishes.  A nonzero Euler class of $W_{\R}$ therefore forces the
block-diagonal locus $Z(v)$---in plain terms, the set of points where $v$ is block diagonal---to meet every $X$-slice.

The normal bundle of the block-diagonal subbundle is the pullback
of $W_{\R}$, which is spin by a characteristic class calculation.  After a transverse
perturbation, $Z(v)$ is a closed four-manifold with the spin structure
induced by the ambient and normal spin structures.  The trapped $2 \times 2$ block
defines a map $a_v:Z(v)\to U(2)$.  Compose it with a fixed retraction
$\rho:U(2)\to SU(2)$ to obtain $f_v=\rho\circ a_v$, and let $c_v$ be the
constant map to the identity.  We define
\begin{equation}\label{eq:spin-class-intro}
 \nu_{X,Q}(v):=[Z(v),f_v]-[Z(v),c_v]
 \in\wt\Omega_4^{\Spin}(SU(2))\cong\Z/2.
\end{equation}
This is a whole lot of technology obscuring a fairly simple idea:
there is a homology theory which assigns a class in $\Z/2$ to
each of our unitaries.  For the choices of $X$ and $Q$ made
below, this class is nonzero on
$w=(u\circ\pr_{S^4})\oplus1_Q$ and zero on $1_E$.
Two cycles represent the same class when their maps extend
over a compatible spin bordism.  In the sequel we will prove
that $\nu_{X,Q}$ is homotopy invariant via such bordisms.
The differing classes of $w$ and $1_E$ then obstruct a
homotopy between them in the homogeneous building block.

The characteristic-number calculation in
\cref{prop:spin-class-formula} multiplies the nonzero class
of the seed by $\langle c_r(Q)^2,[X]\rangle$.  We arrange
this number to be one.  Products of suitable Grassmannians
allow us to preserve it, together with the spin and dimension
conditions, as the inductive system is enlarged.  The
point-evaluation summands have nontrivial tautological
multiplicity bundles.  Their constant unitary actions are
homotopic to the identity, but these homotopies do not
trivialize the bundles.

Choosing the evaluation points densely gives simplicity.
The spectra have only even-dimensional cells, so $K_1(A_i)=0$
at every stage, while the distinguished unitary is not in
the identity component.  If it became null-homotopic in the
limit, it would be null-homotopic at a finite stage.
This contradiction proves the theorem.

\Cref{sec:spin-invariant,sec:finite-stage} contain the construction,
homotopy invariance, and computation of the trapped class.  Readers may
initially accept the calculation for the suspended Hopf class in
\cref{lem:spin-detects-hopf} and consult \cref{sec:preliminaries} as needed.
\Cref{sec:grassmannians,sec:inductive-system} construct the twisted AH
system, and \cref{sec:main-proof} passes the obstruction to its simple
limit.  The real $K$-homology interpretation in
\cref{subsec:KO-interpretation} is not used in the proof.
Throughout, $K$ denotes complex $K$-theory and $KO$ denotes real
$K$-theory.  All algebras in the construction are complex.

 \noindent
 {\bf AI statement.}  All mathematical arguments and proofs are due to the author, and the author takes full responsibility for them.  ChatGPT 5.6 {\it was} used to search for references.
Several of these, which the author might not have found on
his own, clarified that our original real $K$-theory invariant
(which still works, incidentally) could be understood in terms
of spin bordism.  We then realized that the spin bordism viewpoint was enough to prove our main result from existing technology---the real K-theory approach just represented an inessential layer of complexity. 
 
\noindent
{\bf Acknowledgements.}  The author was generously supported by the Simons Foundation via SFI-MPS-TSM-00025606.  The author thanks Stuart White and Shanshan Hua for valuable discussions on this topic during the author's visit to Oxford in 2024, which encouraged him to revisit this problem in light of emerging applications of matrix geometry in C$^*$-algebras.

\section[Preliminaries: unstable K1 and spin bordism]
{Preliminaries: unstable \texorpdfstring{$K_1$}{K1} and spin bordism}
\label{sec:preliminaries}

A bordism joins two closed manifolds by realizing their disjoint
union as the boundary of a compact manifold of one higher dimension.
If the two manifolds carry maps into the same space, a bordism of
maps also requires these maps to extend to a single map on the
connecting manifold.  A homotopy is the special case in which that
manifold is a cylinder.  Bordism therefore allows us to compare
maps whose domains need not be the same manifold.

We will require these manifolds to carry spin structures, which
are additional data on their oriented tangent bundles.  A spin
bordism must induce the specified spin structures on its boundary,
so it imposes a condition beyond the existence of an oriented
bordism.  This extra condition will distinguish the unitary used
in our construction from the identity.  We begin with a description
of spin structures in local frames, before giving the bundle
formulation and the bordism calculation needed below.

\subsection*{A first look at spin structures}

Let $M$ be a smooth oriented $n$-manifold, and choose smoothly
varying inner products on its tangent spaces.  An orthonormal
frame at $x$ is an ordered orthonormal basis of $T_xM$.  On small
open sets $U_i$ we can choose such frames with the given orientation.
On an overlap $U_i\cap U_j$, the change of frame is a rotation
$g_{ij}(x)\in SO(n)$.  These transition functions satisfy
$g_{ij}g_{jk}=g_{ik}$ wherever three sets overlap.

The spin group comes with a two-to-one covering homomorphism
\[
 \lambda:\Spin(n)\longrightarrow SO(n),
 \qquad \ker\lambda=\{1,-1\}.
\]
In dimension two, both groups are circles and this is the map
$z\mapsto z^2$.  Each rotation has two lifts.  On a cover with
contractible overlaps, each $g_{ij}$ can be lifted continuously,
but arbitrary lifts need not satisfy the original compatibility
relations.  A \emph{spin structure} is described by lifts for which
\[
 \lambda(\widetilde g_{ij})=g_{ij},
 \qquad
 \widetilde g_{ij}\widetilde g_{jk}=\widetilde g_{ik}.
\]
The question is thus whether the signs of the lifts can be chosen
consistently on all overlaps.

A manifold equipped with such a choice for its tangent bundle is
a \emph{spin manifold}.  The definition applies equally to any
oriented real vector bundle.  A spin structure is not a global
frame: it adds lifting data without trivializing the bundle.
Existence and choice are separate issues, since an oriented bundle
may admit no spin structure or more than one up to isomorphism
\cite{Milnor63,LawsonMichelsohn89}.

The oriented circle already illustrates why the choice matters.
It has two spin structures, only one of which is induced from the
disk.  The other cannot occur as the sole boundary of a compact
spin surface, as we prove in \cref{lem:spin-circle}.  Thus a manifold
which bounds after we forget the spin structure need not bound
with its specified spin structure.

\subsection{The first unstable unitary class}

For a unital $C^*$-algebra $A$, write
\[
 \U_n(A)=\U(M_n(A)),
 \qquad
 \U_n^0(A)=\text{the identity component of }\U_n(A).
\]
The inclusions $v\mapsto v\oplus1$ define
\[
 K_1(A)=\varinjlim_n\U_n(A)/\U_n^0(A),
\]
and the map \eqref{eq:canonical-K1-map-intro} is induced by the first stabilization.

\begin{definition}\label{def:K1-injective}
A unital $C^*$-algebra $A$ is \emph{$K_1$-injective} if the canonical map
\[
 \kappa_A:\U(A)/\Uzero(A)\longrightarrow K_1(A)
\]
is injective.
\end{definition}

\begin{lemma}\label{lem:unstable-seed}
There is a smooth map
\[
 u:S^4\longrightarrow SU(2)\subseteq U(2)
\]
with the following properties:
\begin{enumerate}[label=\textup{(\roman*)}]
\item $u$ is not null-homotopic in $U(2)$.
\item $u\oplus1_2:S^4\to U(4)$ is null-homotopic.
\item the class of $u$ has order two in $[S^4,U(2)]$.
\end{enumerate}
Consequently $M_2(C(S^4))$ is not $K_1$-injective.
\end{lemma}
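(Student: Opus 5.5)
We will derive all three properties from standard homotopy groups of unitary groups, passing between free and based homotopy classes where needed.

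\emph{Setup.} Since $U(n)\cong SU(n)\times S^1$ as spaces, via $v\mapsto(\operatorname{diag}(\det v^{-1},1,\dots,1)\,v,\ \det v)$, and $S^4$ is simply connected, every map $S^4\to S^1$ is null-homotopic. Hence $\pi_4(U(2))\cong\pi_4(SU(2))=\pi_4(S^3)\cong\Z/2$, generated by the suspended Hopf map $\Sigma\eta:S^4\to S^3$. We take $u$ to be a smooth representative of this generator, composed with the identification $S^3\cong SU(2)$ of unit quaternions. Smoothing is possible by Whitney approximation followed by renormalization into $SU(2)$.

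\emph{Free versus based classes.} Because $U(n)$ is a path-connected topological group, free homotopy classes $[S^4,U(n)]$ coincide with based classes $\pi_4(U(n))$. Indeed, the action of $\pi_1$ on $\pi_4$ of an $H$-space is trivial. Alternatively, a free homotopy can be made based by multiplying pointwise by the inverse of the path traced by the basepoint. Moreover, the group structure on $[S^4,U(n)]$ given by pointwise multiplication agrees with the homotopy-group structure by the Eckmann--Hilton argument.

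\emph{Proof of (i) and (iii).} Both follow at once, since $[u]$ is the nonzero element of a group isomorphic to $\Z/2$, so it is nonzero and has order exactly two.

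\emph{Proof of (ii).} We must show that stabilization $\pi_4(U(2))\to\pi_4(U(4))$ kills $[u]$. Bott periodicity in the stable range gives $\pi_4(U(n))\cong\pi_4(U)=\pi_5(BU)\cong\widetilde K^0(S^5)=0$ for $n\ge3$, because the stable range for $\pi_k(U(n))$ is $k\le 2n-1$. So the target group is already zero, and $u\oplus 1_2$ is null-homotopic in $U(4)$. Equivalently, one can use the fibration $U(2)\to U(3)\to S^5$: the connecting map $\pi_5(S^5)\to\pi_4(U(2))$ is surjective because $\pi_4(U(3))=0$.

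\emph{Consequence.} Identify $\U(M_2(C(S^4)))$ with $C(S^4,U(2))$, and its path components with $[S^4,U(2)]$. Then $[u]\neq0$ in $\U/\Uzero$, while its image in $K_1$ is already zero after the first stabilization, by (ii). Hence $\kappa$ is not injective.

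\emph{Main obstacle.} The only step needing care is (ii), specifically knowing $\pi_4(U(3))=0$. I would cite Bott periodicity, or the standard table giving $\pi_4(SU(3))=0$, rather than reprove it.
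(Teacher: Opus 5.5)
Your proposal is correct and follows essentially the paper's route: identify $\pi_4(U(2))\cong\pi_4(SU(2))\cong\pi_4(S^3)\cong\Z/2$, take $u$ to be the suspended Hopf map to get (i) and (iii), and use Bott periodicity in the stable range to get (ii). The differences are cosmetic: you spell out the free-versus-based and pointwise-versus-concatenation identifications that the paper leaves implicit, and you read off the kernel statement directly from (ii) rather than from $K^{-1}(S^4)=0$.
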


\begin{proof}
The determinant fibration
\[
 SU(2)\longrightarrow U(2)\xrightarrow{\det}S^1
\]
gives
\[
 \pi_4(U(2))\cong\pi_4(SU(2))\cong\pi_4(S^3)\cong\Z/2.
\]
Choose $u$ to represent the nonzero element, and under $SU(2)\cong S^3$ it may be taken to be
the suspension of the Hopf map \cite[Section~5]{PutmanNotes}.  This gives (i) and (iii).  Bott periodicity gives
$\pi_4(U)=0$, and $U(4)$ is in the stable range in degree four, so $u\oplus1_2$ is
null-homotopic.  Since
\[
 K_1(C(S^4))\cong K^{-1}(S^4)=0,
\]
the nonzero component of $u$ lies in the kernel of the canonical map for
$M_2(C(S^4))$.
\end{proof}

\subsection{Spin structures and induced boundary structures}

For an oriented real rank-$n$ bundle $V\to M$, choose a
Euclidean metric and write $P_{SO}(V)\to M$ for its
principal bundle of positively oriented orthonormal frames.

A \emph{spin structure} on $V$ is a principal $\Spin(n)$-bundle $P_{\Spin}(V)\to M$
together with an equivariant two-to-one map
\[
 P_{\Spin}(V)\longrightarrow P_{SO}(V)
\]
covering the identity of $M$ and equivariant for the double covering
$\lambda:\Spin(n)\to SO(n)$.  Thus a spin structure is a lift of the
oriented frame bundle through $\lambda$, not a global frame of $V$.
It need not trivialize the bundle.

A lift exists precisely when $w_2(V)=0$, and its isomorphism classes then
form a torsor for $H^1(M;\Z/2)$.  In particular, an oriented spin bundle
over a simply connected base has a unique spin structure up to
isomorphism.  These statements do not depend on the metric.  A
\emph{spin manifold} is a smooth oriented manifold with a specified spin
structure on its tangent bundle.  Products carry product spin structures.
On $S^4$ we use the unique spin structure, induced on the boundary of
the standard spin five-ball.

A complex bundle $F$ has a canonical real orientation.  Its underlying
real bundle need not be spin, since
\begin{equation}\label{eq:w2-complex-bundle}
 w_2(F_{\R})\equiv c_1(F)\pmod2.
\end{equation}
The obstruction vanishes for $F\oplus F$.  A spin structure on the base
does not imply that every oriented bundle over it is spin, since the tangent
bundle and each auxiliary bundle have their own lifting problems.
For these facts and compatibility with direct sums, see
\cite{LawsonMichelsohn89,MilnorStasheff74}.

For a submanifold, spin structures on the ambient tangent bundle and the
normal bundle determine a spin structure on its tangent bundle through a
tangent-normal splitting.  The following two-out-of-three rule specifies
that structure, rather than merely asserting its existence.

\begin{lemma}\label{lem:spin-two-out-of-three}
Let $M$ be an oriented spin manifold and let $Z\subseteq M$ be an embedded submanifold with
oriented normal bundle $\nu$.  Give $Z$ the orientation determined by the tangent-normal
splitting.  If $\nu$ has a spin structure and
\[
 TZ\oplus\nu\cong TM|_Z,
\]
then the spin structures on $TM|_Z$ and $\nu$ determine a spin structure on $TZ$.

If $M=[0,1]\times B$ has the product spin structure and $Z\subseteq M$ has product collars
\[
 Z\cap([0,\varepsilon)\times B)=[0,\varepsilon)\times Z_0,
 \qquad
 Z\cap((1-\varepsilon,1]\times B)=(1-\varepsilon,1]\times Z_1,
\]
then the induced boundary spin structures are exactly the two-out-of-three structures on
$Z_0$ and $Z_1$.  With the outward-normal-first convention,
\[
 \partial Z=Z_1\sqcup(-Z_0).
\]
\end{lemma}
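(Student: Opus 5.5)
The plan is to work at the level of principal bundles and treat the two parts of the statement separately. For the first part, the tangent-normal splitting $TZ\oplus\nu\cong TM|_Z$ gives a reduction of structure group. Choose a metric and orthogonal splitting. The oriented frame bundle $P_{SO}(TM|_Z)$ then contains the subbundle $P_{SO}(TZ)\times_Z P_{SO}(\nu)$ of adapted frames, whose frames list a basis of $TZ$ followed by a basis of $\nu$. This subbundle is a reduction along the block inclusion
\[
 SO(k)\times SO(n-k)\hookrightarrow SO(n).
\]
The preimage of $SO(k)\times SO(n-k)$ in $\Spin(n)$ is $\Spin(k)\times_{\Z/2}\Spin(n-k)$. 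Pulling $P_{\Spin}(TM|_Z)$ back over the adapted frames therefore gives a $\Spin(k)\times_{\Z/2}\Spin(n-k)$-bundle lifting $P_{SO}(TZ)\times_Z P_{SO}(\nu)$.

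Next I divide out the given spin structure on $\nu$. Concretely, I form the fibre product of this bundle with $P_{\Spin}(\nu)$ over $P_{SO}(\nu)$ and quotient by the diagonal $\Spin(n-k)$-action. Using that $\Spin(k)\times_{\Z/2}\Spin(n-k)\to SO(n-k)$ factors through the second factor, this quotient is a $\Spin(k)$-bundle double covering $P_{SO}(TZ)$. That double cover is the induced spin structure.

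Two independence checks follow. Independence of the metric and of the choice of splitting holds because the space of such choices is contractible. Isomorphism classes are also compatible with the $H^1(Z;\Z/2)$-torsor structure: changing the spin structure of $\nu$ by a class changes that of $TZ$ by the same class. This pins the structure down rather than merely asserting existence.

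For the boundary statement, I restrict the construction to the collar $[0,\varepsilon)\times Z_0\subseteq[0,\varepsilon)\times B$. There the normal bundle of $Z$ is the pullback of the normal bundle $\nu_0$ of $Z_0$ in $B$. The ambient product spin structure on $[0,1]\times B$ is, by definition, obtained from $\underline{\R}\oplus TB$ via the stabilization $\Spin(n-1)\to\Spin(n)$ with the collar coordinate as a distinguished vector. The two-out-of-three structure on $TZ|_{Z_0}=\underline{\R}\oplus TZ_0$ is therefore the stabilization of the two-out-of-three structure of $Z_0\subseteq B$. This holds because the block reductions commute: the inclusions $SO(1)\times SO(k-1)\times SO(n-k)$ into $SO(n)$ through either grouping lift compatibly to spin groups.

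The boundary spin structure is defined by removing an outward normal placed first. At $Z_1$ the outward normal is $+\partial_t$, so the induced structure is exactly the two-out-of-three structure on $Z_1$ with its natural orientation. At $Z_0$ the outward normal is $-\partial_t$. Removing it yields the two-out-of-three structure with reversed orientation, realized by the reflection $e_1\mapsto -e_1$ lifted to $\mathrm{Pin}$, which is the standard meaning of $-Z_0$. This gives $\partial Z=Z_1\sqcup(-Z_0)$.

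I expect the main obstacle to be sign and ordering conventions. These are the tangent-first versus normal-first placement in the splitting, the outward-normal-first boundary convention, and the identification of the orientation reversal of a spin manifold. I will fix one convention for the block inclusion $\Spin(k)\times_{\Z/2}\Spin(n-k)\to\Spin(n)$ at the outset, and verify the collar compatibility on the Clifford algebra level by checking it on generators $e_ie_j$.
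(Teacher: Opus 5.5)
Your proposal is correct and takes essentially the same route as the paper. The lift $\Spin(k)\times_{\Z/2}\Spin(n-k)\to\Spin(n)$ of the block-sum inclusion lets the ambient and normal spin structures determine the tangent one, and on the product collars you cancel the common line $\underline{\R}$ placed first, with the inward-pointing collar at $t=0$ producing $-Z_0$; your fibre-product-and-quotient construction simply makes explicit the step the paper compresses into ``lifts for two of the three bundles determine the lift for the third.''
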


\begin{proof}
After compatible metrics are chosen, the stable splitting identifies the oriented frame
bundle of $TM|_Z$ with the block sum of the frame bundles of $TZ$ and $\nu$.  The block-sum
homomorphism for spin groups lifts the corresponding homomorphism for special orthogonal
groups.  Lifts for two of the three bundles therefore determine the lift for the third.

On a product collar,
\[
 TM\cong\underline{\R}\oplus TB,
 \qquad
 TZ\cong\underline{\R}\oplus TZ_j,
\]
where the collar line carries its canonical spin structure.  Cancelling this common spin
line gives precisely the structure obtained from
$TZ_j\oplus\nu|_{Z_j}\cong TB|_{Z_j}$.  The boundary orientation formula is standard.
\end{proof}

Compatible metrics and splittings of the tangent-normal exact sequence
form contractible spaces.  Once the ambient and normal spin structures
and the normal-bundle identification are fixed, the induced spin
structure is therefore independent of these choices up to isomorphism.
We use this structure on each trapping locus.

\subsection{Spin bordism of mapped manifolds}

Fix a space $T$.  A spin-bordism cycle of dimension $n$ over $T$ is a triple
\[
 (M,\sigma,f),
\]
where $M$ is a closed smooth $n$-manifold, $\sigma$ is a spin structure on
$TM$, and $f:M\to T$ is continuous.  Closed manifolds are compact and
without boundary, but they may be disconnected or empty.  We usually
suppress $\sigma$, although it remains part of the cycle.

\begin{definition}\label{def:spin-bordism}
Two cycles $(M_0,\sigma_0,f_0)$ and $(M_1,\sigma_1,f_1)$ are \emph{spin bordant over
$T$} if there are a compact spin $(n+1)$-manifold $\mathcal W$ and a continuous map
$F:\mathcal W\to T$, together with boundary identifications
\[
 \partial\mathcal W=M_1\sqcup(-M_0),
\]
such that the induced boundary spin structures agree with the prescribed structures
and $F|_{M_i}=f_i$.  The notation $-M_0$ includes the opposite spin orientation.
\end{definition}

We use the outward-normal-first convention.  The outward normal
trivializes the boundary normal line.  Restricting the spin structure
on $T\mathcal W$ and cancelling this framed line gives the boundary
spin structure.  At the initial boundary the collar points inward,
accounting for the minus sign.  Compatibility with the product collars
used below is given by \cref{lem:spin-two-out-of-three}.

The resulting classes form an abelian group
\[
 \Omega_n^{\Spin}(T).
\]
Addition is disjoint union, the empty manifold represents zero, and reversal of spin
orientation gives the inverse.  The cylinder $[0,1]\times M$, with the map constant in
the interval direction, proves $[M,f]+[-M,f]=0$.  A null class is represented by the
boundary of a compact spin manifold over which the map to $T$ extends.

A spin filling of $M$ need not admit an extension of $f$, so $M$ may
bound while $(M,f)$ does not bound over $T$.  This occurs for the cycle
used below: $S^4$ bounds a spin five-ball, but the map $u:S^4\to S^3$
prevents the mapped cycle from bounding.  We call the homology theory
$\Omega_*^{\Spin}$ \emph{spin bordism} \cite{Stong68}.  Its groups of mapped
manifolds should not be confused with the Lie groups $\Spin(n)$ used
to define their tangent structures.

\subsection{Reduction and the one-dimensional coefficient group}

A map $g:T\to T'$ induces a homomorphism by composition:
\[
 g_*:\Omega_n^{\Spin}(T)\longrightarrow\Omega_n^{\Spin}(T'),
 \qquad [M,f]\longmapsto[M,g\circ f].
\]
Suppose $T$ has a basepoint $t_0$.  The collapse $p:T\to\pt$ and basepoint inclusion
$j:\pt\to T$ satisfy $p_*j_*=\id$.  The \emph{reduced} group is
\begin{equation}\label{eq:reduced-spin-group}
 \wt\Omega_n^{\Spin}(T)
 =\ker\bigl(p_*:\Omega_n^{\Spin}(T)\to\Omega_n^{\Spin}(\pt)\bigr).
\end{equation}
If $c_M:M\to T$ is constant at $t_0$, define
\begin{equation}\label{eq:reduced-spin-cycle}
 [M,f]_{\red}:=[M,f]-[M,c_M].
\end{equation}
Both terms use the same spin structure.  Their difference removes the
contribution of the underlying spin manifold, leaving the information
supplied by the map.  In particular, a constant map has zero reduced
class.  The difference in \eqref{eq:reduced-spin-cycle} is represented
by $(M,f)\sqcup(-M,c_M)$.

The only coefficient group needed here is the one-dimensional group.
An oriented circle has two spin structures.  The \emph{bounding}
structure is induced from the disk, while the \emph{nonbounding} structure
is induced by the positively oriented tangent framing of the circle
as a Lie group.  The underlying oriented circle is the same in both
cases.

\begin{lemma}\label{lem:spin-circle}
The group $\Omega_1^{\Spin}(\pt)$ is isomorphic to $\Z/2$, generated by the circle with
its nonbounding spin structure.
\end{lemma}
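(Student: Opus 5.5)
We prove the statement by building a homomorphism $\Omega_1^{\Spin}(\pt)\to\Z/2$ that is onto and then showing it has no kernel. Every closed $1$-manifold is a finite disjoint union of circles, and each circle carries one of its two spin structures. We define a \emph{spin-holonomy} invariant as follows. A spin structure on an oriented circle is a double cover $P_{\Spin}\to P_{SO}\cong S^1$. It is either the trivial double cover or the connected one, and by definition (via the tangent Lie-group framing) the nonbounding structure corresponds to the trivial cover, i.e.\ the one whose lift of the tangent framing closes up. Set $\epsilon(S^1,\sigma)=1$ for the nonbounding structure and $0$ for the bounding one, and extend additively over disjoint unions modulo $2$. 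This is well defined on cycles, and it is compatible with orientation reversal.

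The key step is bordism invariance: if a compact spin surface $\mathcal W$ has boundary $\partial\mathcal W=M_1\sqcup(-M_0)$, then $\epsilon(M_1)=\epsilon(M_0)$. Equivalently, $\epsilon(\partial\mathcal W)=0$ for every compact spin surface $\mathcal W$. I would prove this through the classification of surfaces and a count.

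Cap each boundary component with a disk to obtain a closed surface, or argue directly as follows. Choose a vector field $\xi$ on $\mathcal W$ that is tangent and positively oriented along $\partial\mathcal W$, with isolated zeros. Together with the orientation, $\xi$ gives a framing of $T\mathcal W$ away from its zeros. Compare the spin structure with this framing. Around each boundary circle, the outward normal followed by the tangent gives the framing. The lift of that framing closes up exactly when the boundary structure is nonbounding, and it fails to close for the disk-induced structure. Around a zero of index $k$, the framing rotates $k$ times, so it lifts with sign $(-1)^{k}$. Because the spin structure is a global double cover over the complement of the zeros, the product of all the holonomy signs is $1$. This gives
\[
\sum_{\text{boundary}}(1+\epsilon_i)+\sum_{\text{zeros}} k \equiv 0 \pmod 2 .
\]
The boundary conventions need care: comparing with the disk, whose boundary has $\epsilon=0$ and which has one zero of index $1$, fixes the normalization so that each boundary circle contributes $1+\epsilon_i$. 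By Poincaré--Hopf for this boundary behaviour, $\sum k=\chi(\mathcal W)$. Also $\chi(\mathcal W)\equiv b\pmod 2$, where $b$ is the number of boundary circles, since $\chi=2-2g-b$. Substituting gives $\sum\epsilon_i\equiv 0$, as required. This bookkeeping of signs and conventions is the main obstacle. I would check it first on the disk and on the annulus, and then on the pair of pants.

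Finally we establish surjectivity and injectivity. The nonbounding circle has $\epsilon=1$, so it is nonzero, and $\epsilon$ is onto. For injectivity, it suffices to show two things. First, the bounding circle bounds a disk, so it is zero. Second, two nonbounding circles together bound: the annulus $[0,1]\times S^1$ with the product of the trivial structure and the Lie framing has boundary $S^1_{nb}\sqcup(-S^1_{nb})$, and $-S^1_{nb}\cong S^1_{nb}$ because the inverse map of $S^1$ preserves the Lie framing up to sign. Hence every class is $0$ or the class of a single nonbounding circle, so $\Omega_1^{\Spin}(\pt)\cong\Z/2$, generated by the nonbounding circle.
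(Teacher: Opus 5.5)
Your proof is correct, but it establishes the key fact, that the nonbounding circle is not null-bordant, by a different route from the paper.

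\textbf{The paper's route.} The paper treats only a null-bordism of a single circle. It takes a connected oriented surface $F$ with one boundary circle and caps it to a closed surface $\widehat F$. Since $\widehat F$ is spin, at least one spin structure on $F$ induces the bounding structure on $\partial F$. The remaining spin structures form an $H^1(F;\Z/2)$-torsor, and this action does not change the boundary structure because $\partial F$ is null-homologous. Together with ``bounding circles bound disks'' and ``the nonbounding circle is its own inverse'', this gives a group generated by one element of order exactly two.

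\textbf{Your route.} You build the parity map $\epsilon$ directly and prove that every compact spin surface has an even number of nonbounding boundary components. You trivialize the spin structure along a vector field tangent to the boundary. You then use that the boundary of the complement of the zeros is null-homologous mod $2$, that Poincar\'e--Hopf gives $\sum k=\chi(\mathcal W)$ for fields tangent to the boundary, and that $\chi(\mathcal W)\equiv b \pmod 2$. Both arguments rest on a boundary being null-homologous mod $2$. The paper pairs this with a reference spin structure from capping. You pair it with a framing defect counted by Poincar\'e--Hopf.

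\textbf{Comparison.} Your version avoids the torsor classification and the existence of spin structures on closed surfaces. It handles any number of boundary circles at once and shows directly that $\epsilon$ is a homomorphism on $\Omega_1^{\Spin}(\pt)$. It also yields exactly the ``parity of nonbounding components'' invariant the paper later uses for regular inverse images. The paper's version is shorter, given those standard facts.

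\textbf{One bookkeeping point.} However you complete $\xi$ to an oriented frame, along the boundary that frame differs from the outward-normal-first frame by a constant rotation acting on the right. So its holonomy is the one that defines the induced boundary structure. The term $1+\epsilon_i$ is therefore forced by the definition: the trivial cover is exactly the case where the boundary frame lifts to a closed loop. It is not a normalization imposed by the disk. The disk computation then confirms that the disk-induced structure is the connected cover, so the two structures are distinct.
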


\begin{proof}
Every closed spin one-manifold is a disjoint union of spin circles.  Bounding circles
bound disks.  Reversing orientation preserves each of the two spin types, so the
nonbounding circle is isomorphic to its inverse and has order at most two.

It does not bound a spin surface.  Indeed, let $F$ be a connected oriented surface with
one boundary circle and cap it by a disk to obtain $\widehat F$.  The closed oriented
surface $\widehat F$ admits a spin structure.  Its restriction to $F$ induces the
bounding structure on the boundary.  Every other spin structure on $F$ differs by an
element of $H^1(F;\Z/2)$, whose restriction to the boundary is zero because that circle
is null-homologous in $F$.  Hence every spin structure on $F$ induces the bounding
boundary structure.  This rules out a spin filling of the other circle and proves the
claim (compare \cite[Theorem~2.1]{KirbyTaylor90}).
\end{proof}

Let $(M,\sigma,f)$ be a mapped spin four-manifold over $S^3$.
After smoothing $f$, choose a regular value $y$ different from the
basepoint and set $L=f^{-1}(y)$.  Then $L$ is a closed one-manifold,
hence a finite disjoint union of circles.  An oriented basis of
$T_yS^3$ and the derivative of $f$ give a framing of the rank-three
normal bundle of $L$ in $M$.  This framing and $\sigma$ induce a
spin structure on each component of $L$.

By \cref{lem:spin-circle}, the class of $L$ in
$\Omega_1^{\Spin}(\pt)$ is determined by the parity of its
nonbounding components.  This class is unchanged under spin
bordism of maps.  Indeed, after making such a bordism transverse
to $y$ relative to its boundary, the inverse image of $y$ is a
compact spin surface with the two endpoint inverse images as
boundary, carrying their induced spin structures.  A map constant
at the basepoint has empty inverse image and therefore contributes
zero.

Spin bordism is a generalized homology theory, so its suspension
isomorphism gives
\begin{equation}\label{eq:spin-sphere-suspension}
 \wt\Omega_n^{\Spin}(S^k)\cong\Omega_{n-k}^{\Spin}(\pt).
\end{equation}
In particular,
\begin{equation}\label{eq:spin-target-group}
 \wt\Omega_4^{\Spin}(S^3)
 \cong\Omega_1^{\Spin}(\pt)\cong\Z/2.
\end{equation}
Under the Pontryagin--Thom description, the isomorphism
\eqref{eq:spin-target-group} is given by
\begin{equation}\label{eq:regular-fiber-spin}
 [M,f]_{\red}\longmapsto[f^{-1}(y)]
 \quad\text{in }\Omega_1^{\Spin}(\pt)
\end{equation}
with the spin structure on $f^{-1}(y)$ defined above
\cite{Stong68}.  Thus the parity of the nonbounding components
determines the reduced class.

For comparison, write $\Omega_*^{SO}$ for oriented bordism.
Every oriented circle bounds an oriented disk, so
\[
 \wt\Omega_4^{SO}(S^3)
 \cong\Omega_1^{SO}(\pt)=0.
\]
Thus forgetting the spin structure loses the distinction in
\eqref{eq:spin-target-group}.

\subsection{The suspended Hopf class in spin bordism}

Fix the identity as basepoint of $SU(2)\cong S^3$, and use the smooth map $u$ of
\cref{lem:unstable-seed}.  Equip $S^4$ with its unique spin structure and set
\begin{equation}\label{eq:spin-Hopf-class}
 \eta_{\Spin}:=[S^4,u]_{\red}
 \in\wt\Omega_4^{\Spin}(S^3).
\end{equation}
The constant-map term bounds the spin five-ball.  We retain the reduced
notation to agree with the convention for general trapping loci.
To show that $\eta_{\Spin}$ is nonzero, it suffices to find an odd
number of nonbounding spin circles in a regular inverse image of $u$.
The following lemma does so using the framed-circle description of
the suspended Hopf map.

\begin{lemma}
\label{lem:spin-detects-hopf}
The class $\eta_{\Spin}$ is the nonzero element of
$\wt\Omega_4^{\Spin}(S^3)\cong\Z/2$.
\end{lemma}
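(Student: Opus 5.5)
We will use the Pontryagin--Thom description \eqref{eq:regular-fiber-spin}. It reduces the claim to showing that, for a regular value $y\neq 1$ of $u$, the preimage $L=u^{-1}(y)\subseteq S^4$ carries an odd number of nonbounding spin circles. Here each circle gets the spin structure induced from the unique spin structure on $S^4$ and the normal framing pulled back from an oriented basis of $T_yS^3$.

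The plan is to take $u=\Sigma h$, the suspension of the Hopf map $h:S^3\to S^2$. We model it concretely on the standard Pontryagin--Thom picture for $\pi_3(S^2)$: a regular fiber of $h$ is a single great circle $C\subseteq S^3$, and its normal framing is the Hopf framing. This framing twists once relative to the framing induced from a spanning disk, and it is the one for which $(C,\text{framing})$ represents $\eta\in\pi_1^s$.

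Suspending, the preimage under $\Sigma h$ of a regular point near the equator is $C$ placed in the equatorial $S^3\subseteq S^4$. Its normal framing in $S^4$ is the Hopf framing stabilized by the suspension direction. So $L$ is one circle, and it remains to identify its induced spin structure.

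The key step is the identification of that spin structure. Two normal framings of $C\subseteq S^4$ differ by a map $C\to SO(3)$. Changing the framing by the nontrivial class in $\pi_1(SO(3))\cong\Z/2$ switches the induced spin structure on $C$, since the block sum $SO(1)\times SO(3)\to SO(4)$ lifts to spin groups and the twist does not lift to a loop. So it suffices to compare with a reference framing. For this we take the framing obtained by placing $C$ as the boundary of a flat embedded disk $D\subseteq S^4$, using the disk's inward normal together with two normal directions to $D$. With this framing, $C$ inherits the spin structure induced from $D$, namely the bounding one. This reduction is exactly the standard fact that the framed circle generating $\pi_1^s\cong\Z/2$ is the Lie-group-framed circle. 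Its Lie framing differs from the bounding framing by the generator of $\pi_1(SO(2))\to\pi_1(SO(3))$, which is nonzero mod $2$.

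Concretely, we verify that the Hopf framing of $C$ is the left-invariant (Lie group) framing of the circle subgroup $C\subseteq SU(2)$. This is the standard description of $\eta$ (compare \cite[Section~5]{PutmanNotes}). We then check that, relative to the disk framing, it rotates once, giving an odd element of $\pi_1(SO(3))$. This is the main obstacle: one must track the framing conventions carefully through the suspension. We will argue it invariantly: by the framed (Pontryagin) Thom isomorphism, $\pi_4(S^3)\cong\Omega_1^{fr}\cong\Z/2$ and $\Sigma h$ maps to the generator. Moreover, the forgetful map $\Omega_1^{fr}\to\Omega_1^{\Spin}$ is an isomorphism, since both groups are $\Z/2$ generated by the Lie-framed circle, which has the nonbounding structure by the definition preceding \cref{lem:spin-circle}. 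Compatibility of the two Pontryagin--Thom constructions then shows that $L$ is a single nonbounding spin circle. By \cref{lem:spin-circle} and \eqref{eq:regular-fiber-spin}, $\eta_{\Spin}\neq0$.
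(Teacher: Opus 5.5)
Your proposal is correct and follows essentially the paper's route. Pontryagin--Thom identifies the regular fiber of the suspended Hopf map with the framed circle representing $\eta\in\pi_1^{\mathrm s}$, whose framing differs from the disk-bounding framing by the nontrivial loop in $SO(3)$, so forgetting the framing leaves the nonbounding spin circle; your closing formulation via the isomorphism $\Omega_1^{fr}\to\Omega_1^{\Spin}$ is this same one-twist comparison phrased through the Lie-framed generator, with the two Pontryagin--Thom constructions compatible because $S^4$ has a unique spin structure, as the paper also notes.
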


\begin{proof}
Under Pontryagin--Thom, a regular inverse image of the suspended Hopf map represents the
stable Hopf element $\eta\in\pi_1^{\mathrm s}\cong\Z/2$ in framed bordism.  It can be
represented by a circle whose stable normal framing differs from a disk-bounding
framing by one full twist.  This is the usual framed-circle calculation of
$\pi_4(S^3)$ \cite[Section~5]{PutmanNotes}.

A full twist represents the nontrivial loop in the stable special orthogonal group.
Its lift to the spin double cover does not close.  Thus changing the normal framing by
this twist changes the induced spin structure on the circle from the bounding one to
the other one.  Forgetting the framing therefore leaves the nonbounding spin circle.
The spin structure used on the source $S^4$ is the one induced by its standard stable
framing, since $S^4$ has only one spin structure.

By \eqref{eq:regular-fiber-spin}, $\eta_{\Spin}$ maps to the class of that nonbounding
circle.  This class is nonzero by \cref{lem:spin-circle}.  The target in
\eqref{eq:spin-target-group} has order two, so $\eta_{\Spin}$ is its generator.
\end{proof}

\begin{remark}\label{rem:regular-value-hopf}
We use only the nonvanishing of $\eta_{\Spin}$.  The framed-circle
calculation proves this without computing any further
spin-bordism coefficient groups.
\end{remark}

\subsection{Euler numbers as signed zero counts}

Let $V\to M$ be an oriented real vector bundle of rank $k$ over a
closed oriented $k$-manifold.  A section $s$ transverse to the zero
section has isolated zeros, and its ordinary Euler number satisfies
\begin{equation}\label{eq:ordinary-Euler-zero-count}
 \sum_{x\in Z(s)}\operatorname{sign}_x(s)
 =\left\langle e(V),[M]\right\rangle.
\end{equation}
Here $\operatorname{sign}_x(s)$ is $+1$ or $-1$ according as
$D_xs:T_xM\to V_x$ preserves or reverses orientation.  For a complex bundle of rank
$r$, canonically oriented as a real bundle, $e(V_{\R})=c_r(V)$
\cite{MilnorStasheff74}.  In our application, each zero of $s$ gives a
copy of $S^4$ with the orientation specified by its sign.  Disjoint
union turns this signed count into a multiplier in spin bordism.
No generalized Euler class is needed.

\subsection{Smooth bundle models and relative transversality}

Let $B$ be a closed smooth manifold and let $E\to B$ be a Hermitian complex vector bundle.
We write $\Gamma(\End E)$ for the $C^*$-algebra of continuous endomorphism sections.  Its
unitary group is the space of continuous sections of the smooth bundle
\[
 \pi_{\U}:\U(E)\longrightarrow B
\]
whose fiber at $b$ is the unitary group of $E_b$.

Every continuous unitary section is homotopic to a smooth one.
Indeed, approximate a continuous unitary section $v$ uniformly by a
smooth section $x$ of $\End E$.  If $\|x-v\|<1$, then $x$ is fiberwise invertible and its polar part
\[
 \operatorname{pol}(x)=x(x^*x)^{-1/2}
\]
is a smooth unitary section.  For a sufficiently close approximation,
$\operatorname{pol}(x)$ is uniformly less than two from $v$ and can be
joined to it by the fiberwise logarithmic path.  The same argument
applies to homotopies over $[0,1]\times B$ and, by relative smooth
approximation, can be fixed on a closed subset where the original
section is already smooth \cite{Hirsch76}.

For the sections used below, transversality can be tested on one
off-diagonal block.  Suppose that $E=\thetaC^2\oplus Q$, where $Q$
has complex rank $r$, and write
\[
 v=
 \begin{pmatrix}
  a&b\\
  c&d
 \end{pmatrix}
\]
relative to this splitting.  The upper-right block $b$ is a section
of $\Hom(Q,\thetaC^2)$.  If $b(x)=0$, unitarity gives
\[
 a(x)a(x)^*=1_2,
 \qquad
 a(x)^*a(x)+c(x)^*c(x)=1_2.
\]
The square matrix $a(x)$ is therefore unitary, and the second equality
gives $c(x)=0$.  Thus the block-diagonal locus is exactly the zero set
of $b$.

The normal-space calculation in \cref{lem:normal-bundle-D} shows
that $v$ is transverse to the block-diagonal subbundle precisely
when, at every zero $x$ of $b$, the derivative
\[
 D_xb:T_xB\longrightarrow\Hom(Q_x,\C^2)_{\R}
\]
is surjective as a real linear map.  At a zero of $b$ this derivative
is intrinsic, although it may be computed in a local bundle frame.
The regular-value theorem then makes the zero set a smooth
submanifold of real codimension $4r$.

Perturbing the off-diagonal block alone need not preserve unitarity.
We must perturb within the unitary bundle and, for a homotopy
already transverse on endpoint collars, keep those collars fixed.
The following relative transversality result provides such
perturbations within the bundle fibers.

\begin{proposition}
\label{prop:relative-section-transversality}
Let $p:F\to M$ be a smooth fiber bundle over a compact smooth manifold, possibly with
boundary, and let $S\subseteq F$ be a closed smooth subbundle.  Let $C\subseteq M$ be
closed, and suppose that a smooth section $s:M\to F$ is transverse to $S$ on a
neighborhood of $C$.  Then every $C^\infty$-neighborhood of $s$ contains a smooth section
$s'$ such that
\[
 s'=s\quad\text{on a neighborhood of }C,
 \qquad
 s'\pitchfork S\quad\text{on all of }M.
\]
If $M$ has boundary and $C$ contains a boundary collar, the perturbation may be taken fixed
on that collar.
\end{proposition}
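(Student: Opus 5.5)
The plan is to reduce to the classical parametric transversality theorem applied to a finite-dimensional family of sections, and then apply a bump-function cutoff so that nothing changes near $C$. First, localize. Since $s$ is transverse to $S$ on an open neighborhood $U$ of $C$, choose a smaller closed neighborhood $C'\subseteq U$ of $C$, and a smooth function $\chi:M\to[0,1]$ with $\chi=0$ on a neighborhood of $C$ and $\chi=1$ off $U$. When $C$ contains a boundary collar, we may take $\chi$ to vanish on a slightly smaller collar. All perturbations will be multiplied by $\chi$, so they are trivially fixed near $C$.

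Next, build a finite-dimensional family of sections through $s$. Choose a fiberwise connection, meaning a smooth splitting of $TF$ into vertical and horizontal parts, and a fiberwise exponential map. Then pick finitely many smooth vertical vector fields $\xi_1,\dots,\xi_N$ along $s$, meaning sections of $s^*T^vF$, that span the vertical tangent space $T^v_{s(x)}F$ at every $x\in M$. Compactness of $M$ makes $N$ finite, and local frames together with a partition of unity produce these fields. Define
\[
 \Phi:\R^N\times M\to F,\qquad \Phi(t,x)=\exp^{v}_{s(x)}\Bigl(\chi(x)\sum_i t_i\xi_i(x)\Bigr)
\]
for $|t|$ small. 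Each $\Phi(t,\cdot)$ is then a smooth section of $p$, because the vertical exponential stays in the fiber. In the unitary-bundle case this is concrete: $\exp^{v}$ is $v\mapsto v\,e^{iH}$ with $H$ a self-adjoint section.

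The key claim is that $\Phi$ is transverse to $S$ on $B_\delta(0)\times M$. Take a point $(t,x)$ with $\Phi(t,x)\in S$. If $\chi(x)\neq0$, the $t$-derivatives already span the vertical tangent space at $\Phi(t,x)$, because $d\exp^v$ is invertible for small arguments. Since $S$ is a subbundle, the horizontal directions, or equivalently $d\Phi$ in the $x$-directions projected by $dp$, account for the rest, so $T_{\Phi}F=\operatorname{im}d\Phi+TS$. If $\chi(x)=0$, then $x\notin M\setminus U$, so $x\in U$ where $s$ is transverse. Also $\Phi(t,\cdot)$ agrees with $s$ to first order at such points only if $d\chi(x)=0$. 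This is the step I expect to need care. I would handle it by shrinking: transversality is an open condition, so for $\delta$ small, $\Phi(t,\cdot)$ stays transverse on the compact set $\operatorname{supp}(1-\chi)\cap\operatorname{supp}\chi\subseteq U$, uniformly in $|t|<\delta$. On the region where $\chi=0$ the section is literally $s$. Hence it suffices to have the transversality of $\Phi$ at points with $\chi(x)>0$, and this suffices after restricting attention to the open set $\{\chi>0\}$.

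Finally, apply the parametric transversality theorem (Thom), in the version with boundary from \cite{Hirsch76}, to $\Phi$ restricted to $B_\delta(0)\times\{\chi>0\}$. It gives that $\Phi(t,\cdot)\pitchfork S$ there for almost every $t$. On the rest of $M$, transversality holds by the uniform openness argument above. Taking $t$ small places $s'=\Phi(t,\cdot)$ in the prescribed $C^\infty$-neighborhood, and $s'=s$ wherever $\chi=0$, which includes a neighborhood of $C$ and the boundary collar. The main obstacle is the transition region where $0<\chi<1$ near $C'$. I would handle it by the openness and compactness argument rather than by demanding that the family be submersive there.
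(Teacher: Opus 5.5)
Your proposal is correct and follows essentially the paper's route: a finite family of vertical perturbations that vanish near $C$ and span the vertical directions elsewhere (the paper composes flows of vertical vector fields where you use a cutoff times a fiberwise exponential), combined with openness of transversality on a compact set and the parametric transversality theorem. The paper differs only in bookkeeping: it first uses openness and compactness to make the whole family transverse on $B_\delta(0)\times M$ and then applies parametric transversality once. One small fix to your version: taking $\chi=1$ off $U$ only gives $\operatorname{supp}(1-\chi)\subseteq\overline U$, so either take $\chi=1$ off your closed neighborhood $C'\subseteq U$, or apply the openness step only to the compact set $\{\chi=0\}\cap\operatorname{supp}\chi\subseteq U$.
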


\begin{proof}
Choose open neighborhoods
\[
 C\subseteq U_0\subseteq\overline{U_0}\subseteq U_1
\]
such that $s$ is transverse to $S$ on $U_1$, and put $K=M\setminus U_0$.  A tangent vector to $F$ is called \emph{vertical} if it is tangent
to a fiber of $p:F\to M$.  Since $S$ is a subbundle, the vertical
tangent space at each point of $S$ surjects onto the normal space
of $S$ in $F$. Cover the compact set $s(K)$ by finitely many bundle charts.  Using
vertical coordinate fields, bump functions, and a partition of unity, choose smooth
vertical vector fields
\[
 X_1,\ldots,X_N
\]
which vanish over a neighborhood of $C$ and span the relevant vertical tangent spaces near
$s(K)$.

Let $\Phi_j^t$ be the local flow of $X_j$.  For a sufficiently small parameter
$\mathbf t=(t_1,\ldots,t_N)$, define
\[
 \mathcal F(\mathbf t,x)
 =\Phi_N^{t_N}\circ\cdots\circ\Phi_1^{t_1}(s(x)).
\]
Because all fields are vertical, $x\mapsto\mathcal F(\mathbf t,x)$ is a section and agrees
with $s$ near $C$.  At parameter zero the parameter derivatives span the vertical tangent
space over $K$, hence surject onto the normal space at every intersection with $S$ there,
while on $U_1$ transversality is already supplied by $Ds$.  Compactness and closedness of $S$
allow the parameter ball to be shrunk so that $\mathcal F$ is transverse to $S$ as a map
from the parameter ball times $M$.  Parametric transversality then provides arbitrarily
small regular parameters.  If a boundary collar is contained in $C$, every perturbing
field vanishes there.
\end{proof}

\section{The trapped spin-bordism class}\label{sec:spin-invariant}

Fix $r\geq1$.  Let $X$ be a closed, connected, simply connected
smooth spin manifold of real dimension $4r$, and let $Q\to X$
be a Hermitian complex vector bundle of rank $r$.
Put $B=S^4\times X$ and pull $Q$ back to $B$ without
changing notation.  Let
\begin{equation}\label{eq:E-splitting}
 E=\thetaC^2\oplus Q.
\end{equation}
The product manifold $B$ is spin.  Since it is simply connected, its spin structure is
unique.

\subsection{The block-diagonal subbundle}

Inside $\U(E)$, let
\[
 \mathcal D(E)\subseteq\U(E)
\]
be the smooth subbundle of block-diagonal unitaries.  The fiber over $(x,\xi)\in B$ is
\[
 U(2)\times U(Q_\xi)\subseteq U(\C^2\oplus Q_\xi).
\]
Set
\begin{equation}\label{eq:W-definition}
 W=\Hom(Q,\thetaC^2)\cong Q^*\oplus Q^*.
\end{equation}
It has complex rank $2r$ and real rank $4r$.

\begin{lemma}
\label{lem:normal-bundle-D}
There is a natural isomorphism of oriented real vector bundles
\[
 \nu\bigl(\mathcal D(E)\subseteq\U(E)\bigr)
 \cong
 \pi_{\mathcal D}^*W_{\R},
\]
where $\pi_{\mathcal D}:\mathcal D(E)\to B$ is the bundle projection.  In particular,
$\mathcal D(E)$ has real codimension $4r$ in $\U(E)$.
\end{lemma}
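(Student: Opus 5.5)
The plan is to compute the normal bundle fiberwise, using the homogeneous-space structure of the unitary group, and then to check that the identification is natural in the base point. Fix $b\in B$ and a point $g=\operatorname{diag}(a,d)\in\mathcal D(E)_b=U(2)\times U(Q_b)$. Left translation by $g$ identifies the vertical tangent space $T_g\U(E)_b$ with $\mathfrak u(\C^2\oplus Q_b)$, the skew-Hermitian endomorphisms. Under the same translation, $T_g\mathcal D(E)_b$ corresponds to the block-diagonal skew-Hermitian matrices. A skew-Hermitian $X$ with off-diagonal blocks $\beta\in\Hom(Q_b,\C^2)$ and $\gamma=-\beta^*$ is determined by $\beta$. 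Hence the quotient $\mathfrak u(\C^2\oplus Q_b)/(\mathfrak u(2)\oplus\mathfrak u(Q_b))$ is identified with $\Hom(Q_b,\C^2)_{\R}=W_b$ via $X\mapsto\beta$.

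Next I would pass from vertical to full normal spaces. Both $\U(E)$ and $\mathcal D(E)$ are bundles over $B$, and $\mathcal D(E)$ is a subbundle. So the horizontal directions are common to both, and
\[
 \nu_g=T_g\U(E)/T_g\mathcal D(E)=T_g\U(E)_b/T_g\mathcal D(E)_b .
\]
This identification uses only the vertical exact sequences and needs no connection.

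I would then make the map intrinsic. The most natural choice is to send a tangent vector $\dot v$ at $g$ to the upper-right block of $\dot v$ itself, rather than of $g^{-1}\dot v$. For block-diagonal $g$, the upper-right block of $g X$ is $a\beta$. The map $\beta\mapsto a\beta$ is a complex-linear automorphism of $W_b$. Therefore $\dot v\mapsto(\dot v)_{12}$ is also a surjection onto $W_b$ whose kernel is exactly $T_g\mathcal D(E)$. This version has two advantages. It matches the transversality criterion quoted earlier, namely surjectivity of $D_xb$. It is also manifestly smooth, since it is the derivative of the smooth bundle map $\U(E)\to W$, $v\mapsto v_{12}$, whose zero set is $\mathcal D(E)$. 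That zero-set description is the unitarity computation already done in the paper. I would therefore present the isomorphism $\nu\cong\pi_{\mathcal D}^*W_{\R}$ as the map induced by the differential of the upper-right-block map, which is a submersion along $\mathcal D(E)$. The codimension count $\dim_{\R}W_b=2\cdot 2r=4r$ follows immediately.

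The main obstacle is orientation and naturality. $\nu$ carries the orientation transported from $W_{\R}$, and $W_{\R}$ is canonically oriented as a complex bundle. The identification must be canonical, independent of frames, so that the orientation is well defined globally. Using $\dot v\mapsto(\dot v)_{12}$ rather than the left-translated version avoids any choice: it is defined from the splitting $E=\thetaC^2\oplus Q$ alone. The left-translated identification differs from it by the complex automorphism $a$, which preserves the complex orientation, so the two are homotopic through orientation-preserving isomorphisms. Finally, the isomorphism $\Hom(Q,\thetaC^2)\cong Q^*\oplus Q^*$ is complex linear, so it too respects orientations.
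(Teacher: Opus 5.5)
Your proof is correct, but it chooses a different isomorphism from the paper's and proves it by a different mechanism.

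The paper left-translates a tangent vector at $g$ into the Lie algebra and identifies the normal space with the Hilbert--Schmidt complement $\{X_z\}$ of the block-diagonal skew-adjoint matrices. Its map is therefore $\dot v\mapsto(g^{-1}\dot v)_{12}$, and it checks naturality by hand through the frame-change law $hX_zh^{-1}=X_{h_0zh_1^{-1}}$.

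You instead exhibit $\mathcal D(E)$ as the preimage of the zero section under the smooth fiberwise map $v\mapsto v_{12}$ from $\U(E)$ to $W$. You check that this map is transverse to the zero section along $\mathcal D(E)$, and read off $\nu\cong\pi_{\mathcal D}^*W_{\R}$ as the normal bundle of a transverse zero locus. Smoothness and frame-independence come for free, and your identification is literally the ``derivative of the upper-right block'' that the paper uses as its transversality test.

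The paper's left-translated normalization buys invariance under left multiplication by block-diagonal unitaries. This is exactly what the last assertion of \cref{lem:unitary-rotation} and the spin comparison in \cref{prop:spin-class-formula} use: the factor $u(y)$ is removed, so $R$ and $wR$ have identical normal identifications. With your map, the normal derivatives of $R$ and $wR$ differ by $u(y)$. You would then need the extra (true) remark that this does not change the transported spin structure, since each component of the locus is a copy of the simply connected $S^4$.

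One sentence of yours should be corrected. The comparison automorphism $\beta\mapsto a\beta$ of $\pi_{\mathcal D}^*W_{\R}$ is complex linear in each fiber, and that pointwise fact is all the orientation claim needs. But the two identifications are \emph{not} homotopic through bundle isomorphisms over $\mathcal D(E)$. Restricted to $U(2)\times\{1_{Q_b}\}$ in a single fiber, the automorphism is the map $a\mapsto(\beta\mapsto a\beta)$ from $U(2)$ to $GL^+(W_{b,\R})$, which is not null-homotopic. Since spin structures are transported through this identification immediately afterward, you should not rely on such a homotopy.
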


\begin{proof}
Fix $b\in B$ and write $E_b=E_{0,b}\oplus E_{1,b}$ with $E_{0,b}=\C^2$ and
$E_{1,b}=Q_b$.  Let
\[
 g=\begin{pmatrix}g_0&0\\0&g_1\end{pmatrix}
 \in U(E_{0,b})\times U(E_{1,b}).
\]
Left translation by $g^{-1}$ identifies $T_gU(E_b)$ with the skew-adjoint endomorphisms of
$E_b$.  Relative to the splitting, the Hilbert--Schmidt orthogonal complement of
$
 \mathfrak u(E_{0,b})\oplus\mathfrak u(E_{1,b})
$
consists of
\[
 X_z=
 \begin{pmatrix}
 0&z\\
 -z^*&0
 \end{pmatrix},
 \qquad z\in\Hom(E_{1,b},E_{0,b}).
\]
Thus $z\mapsto gX_z$ identifies the normal space at $g$ with $W_b$ as a real vector space.
If
\[
 h=\begin{pmatrix}h_0&0\\0&h_1\end{pmatrix}
\]
is a change of local unitary frame respecting the splitting, then
\[
 hX_zh^{-1}=X_{h_0zh_1^{-1}},
\]
which is exactly the transition law for $\Hom(Q,\thetaC^2)$.  The identifications assemble
to the asserted bundle isomorphism.  The transition maps are complex linear, so the
canonical real orientation is preserved.
\end{proof}

The bundle $W_{\R}$ is spin because
\begin{equation}\label{eq:w2-W-zero}
 w_2(W_{\R})\equiv c_1(W)=2c_1(Q^*)\equiv0\pmod2.
\end{equation}
Since $B$ is simply connected, this spin structure is unique.  Pull it
back to the normal bundle of $\mathcal D(E)$ using
\cref{lem:normal-bundle-D}.  We fix this structure on the whole
block-diagonal subbundle and transport it to each inverse-image locus
by the normal derivative of the unitary section.  The spin structures
on the individual loci are not chosen independently.

\subsection{Definition of the trapped class}

Let $v:B\to\U(E)$ be a smooth unitary section transverse to $\mathcal D(E)$.  Explicitly,
at every $b$ with $v(b)\in\mathcal D(E)$, the map
\[
 T_bB\xrightarrow{dv_b}T_{v(b)}\U(E)
 \longrightarrow\nu_{v(b)}\mathcal D(E)
\]
is surjective.  In a local block chart this says that the derivative of the upper-right
block is onto at its zeros.  The inverse image
\[
 Z(v):=v^{-1}(\mathcal D(E))
\]
is therefore a closed smooth four-manifold.  

Compose the derivative of $v$ with projection onto the normal bundle
of $\mathcal D(E)$ and the identification in
\cref{lem:normal-bundle-D}.  The resulting map vanishes on $TZ(v)$,
and transversality gives an isomorphism
\[
 \overline{dv}:
 \frac{TB|_{Z(v)}}{TZ(v)}
 \xrightarrow{\ \cong\ }W_{\R}|_{Z(v)}.
\]
The left-hand bundle is the normal bundle of $Z(v)$ in $B$.  Give it
the orientation and spin structure transported from $W_{\R}|_{Z(v)}$
through this derivative isomorphism.  This uses the normal spin structure
fixed on the block-diagonal subbundle.  Splitting the
tangent-normal exact sequence gives
\begin{equation}\label{eq:tangent-stable-isomorphism}
 TZ(v)\oplus W_{\R}|_{Z(v)}\cong TB|_{Z(v)}.
\end{equation}
The restricted ambient spin structure and the normal spin structure
determine a spin structure $\sigma_v$ on $TZ(v)$ by
\cref{lem:spin-two-out-of-three}.  Its isomorphism class does not
depend on the auxiliary splitting.

The manifold $Z(v)$ need not be simply connected.  Simple connectivity
of $B$ gives uniqueness of the initial spin structures on $TB$ and
$W_{\R}$, not uniqueness of all spin structures on $Z(v)$.  The
invariant uses the particular structure $\sigma_v$ induced by the
normal derivative and the splitting above.

On $Z(v)$ the unitary is block diagonal:
\[
 v|_{Z(v)}=
 \begin{pmatrix}
 a_v&0\\
 0&d_v
 \end{pmatrix}.
\]
Since the first summand of $E$ is globally trivial, its block defines
a smooth map $a_v:Z(v)\to U(2)$.  Fix the smooth retraction
\begin{equation}\label{eq:rho-retraction}
 \rho:U(2)\longrightarrow SU(2),
 \qquad
 \rho(A)=
 \begin{pmatrix}
 \det(A)^{-1}&0\\0&1
 \end{pmatrix}A.
\end{equation}
It fixes $SU(2)$ pointwise.  Put $f_v=\rho\circ a_v$, and let $c_v:Z(v)\to SU(2)$ be
the constant map at the identity.

\begin{definition}\label{def:spin-trapping-invariant}
For a transverse smooth unitary section $v$, define
\begin{equation}\label{eq:spin-trapping-invariant}
 \nu_{X,Q}(v)
 :=[Z(v),\sigma_v,f_v]-[Z(v),\sigma_v,c_v]
 =[Z(v),f_v]_{\red}
 \in\wt\Omega_4^{\Spin}(SU(2)).
\end{equation}
For an arbitrary continuous unitary section, choose a homotopic smooth transverse
representative and use this formula.  The next theorem proves independence of that
choice.
\end{definition}

By \eqref{eq:spin-target-group}, the target is
\[
 \wt\Omega_4^{\Spin}(SU(2))\cong\Omega_1^{\Spin}(\pt)\cong\Z/2.
\]
The constant-map subtraction makes a constant protected block contribute
zero, even when the underlying spin four-manifold has nonzero absolute
bordism class.

The two sections compared in \cref{sec:finite-stage} already require
transverse replacement:
\[
 w=(u\circ\pr_{S^4})\oplus1_Q
 \qquad\text{and}\qquad
 1_E.
\]
Both take values in $\mathcal D(E)$, so their normal derivatives vanish
and neither is transverse.  In \cref{prop:spin-class-formula} we
construct transverse representatives in their respective homotopy
classes.  Each replacement stays in its own class, and no homotopy
between $w$ and $1_E$ is assumed.

\begin{theorem}\label{thm:trapping-invariant-well-defined}
The class $\nu_{X,Q}(v)$ is independent of the chosen transverse representative and depends
only on the path component of $v$ in
\[
 \U\bigl(\Gamma(\End E)\bigr).
\]
Consequently \eqref{eq:spin-trapping-invariant} defines a function
\[
 \nu_{X,Q}:
 \U\bigl(\Gamma(\End E)\bigr)/\Uzero\bigl(\Gamma(\End E)\bigr)
 \longrightarrow
 \wt\Omega_4^{\Spin}(SU(2)).
\]
\end{theorem}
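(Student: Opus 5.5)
The plan is to show that two transverse smooth sections $v_0,v_1$ in the same path component of $\U(\Gamma(\End E))$ give the same class. This covers both claims at once: two transverse representatives of a single continuous section lie in one component. Existence of a transverse representative in each component comes from the smoothing argument of the preliminaries together with \cref{prop:relative-section-transversality} applied with $C=\emptyset$. Since the perturbation there is $C^\infty$-small, it stays within distance less than two of the original section, so the logarithmic path keeps it in the same component.

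\textbf{Step 1: build a good homotopy.} Take a continuous path from $v_0$ to $v_1$ and view it as a unitary section $V$ of the pullback of $E$ over $M=[0,1]\times B$. Reparametrize so that $V$ is constant in $t$ on collars $[0,\varepsilon)\times B$ and $(1-\varepsilon,1]\times B$. By the relative smooth approximation remark, make $V$ smooth while keeping it fixed on these collars, where it is already smooth. On the collars $V$ is transverse to the pulled-back block-diagonal subbundle, because $v_0,v_1$ are transverse and $t$-independent. \Cref{prop:relative-section-transversality}, with $C$ the two collars, then gives a smooth $V'$, equal to $V$ near $\partial M$, that is transverse to $[0,1]\times\mathcal D(E)$.

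\textbf{Step 2: the bordism.} The normal bundle of $[0,1]\times\mathcal D(E)$ is still the pullback of $W_{\R}$ by \cref{lem:normal-bundle-D}. So $\mathcal Z=V'^{-1}([0,1]\times\mathcal D(E))$ is a compact five-manifold with product collars $[0,\varepsilon)\times Z(v_0)$ and $(1-\varepsilon,1]\times Z(v_1)$. Transport the spin structure on $W_{\R}$, unique over the simply connected $M$ and restricting to the fixed one on each end, through $\overline{dV'}$ to the normal bundle of $\mathcal Z$. Combine it with the product spin structure on $M$ using \cref{lem:spin-two-out-of-three}. The second half of that lemma shows that the induced boundary spin structures are exactly $\sigma_{v_1}$ and $\sigma_{v_0}$, and that $\partial\mathcal Z=Z(v_1)\sqcup(-Z(v_0))$. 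On $\mathcal Z$ the section is block diagonal, so $F=\rho\circ a_{V'}:\mathcal Z\to SU(2)$ restricts to $f_{v_0}$ and $f_{v_1}$ on the ends. The constant map restricts to $c_{v_0}$ and $c_{v_1}$. Hence $(\mathcal Z,F)$ is a spin bordism between $(Z(v_0),f_{v_0})$ and $(Z(v_1),f_{v_1})$, and $(\mathcal Z,\mathrm{const})$ is one between the constant-map cycles. Subtracting gives $\nu_{X,Q}(v_0)=\nu_{X,Q}(v_1)$.

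\textbf{Main obstacle.} The delicate point is the spin-structure bookkeeping in Step 2. The normal spin structure on $\mathcal Z$ must restrict on the collars to the one used to define $\sigma_{v_j}$. This requires the identification of \cref{lem:normal-bundle-D} to be natural under pullback along $[0,1]\times B\to B$. It also requires the normal derivative of $V'$ on a collar, modulo the $t$-direction, to agree with $\overline{dv_j}$. Both hold because $V'$ is $t$-independent there, so the collar normal bundle is the pullback of the normal bundle of $Z(v_j)$ in $B$. The ambient $[0,1]$ direction is then the framed collar line that is cancelled in \cref{lem:spin-two-out-of-three}.

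The remaining point is that isomorphic spin structures give equal bordism classes. This holds because an isomorphism of spin structures yields a cylinder bordism, and it also accounts for the independence from splittings noted after \cref{lem:spin-two-out-of-three}. The orientation convention (outward normal first, inward at $t=0$) is what produces the sign on $Z(v_0)$. In $\Z/2$ this sign is harmless, but the argument is valid in general.
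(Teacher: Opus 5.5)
Your proposal is correct and follows essentially the same route as the paper. Both reduce to two transverse smooth representatives in one component, then build a homotopy constant on endpoint collars, smooth it and perturb it to transversality relative to those collars. The spin structure on $\mathcal Z$ is induced by \cref{lem:spin-two-out-of-three}, and the bordism over $SU(2)$ is obtained from $F=\rho\circ a_V$ together with the constant map, subtracting the two bordism identities to get equality of reduced classes.
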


\begin{proof}
Let $w_0$ and $w_1$ be homotopic continuous unitary sections.  Choose
arbitrary smooth transverse representatives $v_j\sim_h w_j$ for
$j=0,1$.  They exist by smooth approximation and
\cref{prop:relative-section-transversality}, since sufficiently small
perturbations remain in the same unitary path component.
Concatenating the replacement paths with the given homotopy gives
\[
 v_0\sim_h w_0\sim_h w_1\sim_h v_1.
\]
It suffices to show that the mapped trapping loci of $v_0$ and $v_1$
define the same reduced spin-bordism class.  Taking $w_0=w_1$
then also proves independence of the representative chosen in
\cref{def:spin-trapping-invariant}.

Reparametrize the concatenated homotopy so that it is constant
near each endpoint.  Relative smooth approximation, fixed on
smaller endpoint collars, then gives a smooth unitary homotopy
\[
 V:[0,1]\times B\longrightarrow\U(\pr_B^*E).
\]
On the collars, $V$ equals $v_0$ or $v_1$ and is already transverse to
$[0,1]\times\mathcal D(E)$.  Apply
\cref{prop:relative-section-transversality} with these collars fixed.
An arbitrarily small perturbation through unitary sections gives
\[
 V\pitchfork[0,1]\times\mathcal D(E).
\]

\smallskip
\noindent
\emph{The smooth bordism.}
The inverse image
\[
 \mathcal Z:=V^{-1}([0,1]\times\mathcal D(E))
\]
is a compact smooth five-manifold.  Because the perturbation
was fixed on endpoint collars, for sufficiently small
$\varepsilon>0$ we have
\[
 \begin{aligned}
  \mathcal Z\cap([0,\varepsilon)\times B)
   &=[0,\varepsilon)\times Z(v_0),\\
  \mathcal Z\cap((1-\varepsilon,1]\times B)
   &=(1-\varepsilon,1]\times Z(v_1).
 \end{aligned}
\]
The two endpoint trapping loci are therefore the boundary components
of $\mathcal Z$.

\smallskip
\noindent
\emph{Compatibility of the spin structures.}
By \cref{lem:normal-bundle-D}, the normal derivative of $V$
identifies the normal bundle of $\mathcal Z$ with
$(\pr_B^*W_{\R})|_{\mathcal Z}$.  Transport the fixed spin structure
through this identification.  The resulting normal spin structure
and the product spin structure on $[0,1]\times B$ determine a
spin structure $\sigma_V$ on $\mathcal Z$ through the splitting
\[
 T\mathcal Z\oplus(\pr_B^*W_{\R})|_{\mathcal Z}
 \cong T([0,1]\times B)|_{\mathcal Z}
\]

Choose the splitting to be a product on the endpoint collars.
On the $j$th collar, $V$ is constant in the interval direction
and equals $v_j$, so its normal derivative is the pullback of
that of $v_j$.  It follows that the normal identification and
transported spin structure are the pullbacks of those used to
define $\sigma_{v_j}$.  At the endpoint, the splitting is
\[
 \underline{\R}\oplus TZ(v_j)\oplus W_{\R}|_{Z(v_j)}
 \cong
 \underline{\R}\oplus TB|_{Z(v_j)}.
\]
Cancelling the common collar line with its canonical spin
structure recovers \eqref{eq:tangent-stable-isomorphism}, with
the ambient and normal spin structures used to define
$\sigma_{v_j}$.  By \cref{lem:spin-two-out-of-three}, the
induced boundary spin structures are therefore the prescribed
endpoint structures.  With the outward-normal-first convention,
\[
 \partial\mathcal Z=Z(v_1)\sqcup(-Z(v_0))
\]
as spin manifolds, where the minus sign includes reversal of
the spin orientation.

\smallskip
\noindent
\emph{Extension of the protected map.}
The section $V$ is block diagonal on all of $\mathcal Z$.  Its
protected summand is globally trivial, so the upper-left block
defines a smooth map
\[
 a_V:\mathcal Z\longrightarrow U(2).
\]
Composing with the fixed retraction $\rho$ from
\eqref{eq:rho-retraction} gives
\[
 F:=\rho\circ a_V:\mathcal Z\longrightarrow SU(2),
 \qquad
 F|_{Z(v_j)}=f_{v_j}.
\]
Thus $(\mathcal Z,\sigma_V,F)$ is a spin bordism over $SU(2)$
between the endpoint cycles with their prescribed spin structures
and maps.  By \cref{def:spin-bordism},
\[
 [Z(v_0),\sigma_{v_0},f_{v_0}]
 =
 [Z(v_1),\sigma_{v_1},f_{v_1}]
\]
in $\Omega_4^{\Spin}(SU(2))$.

\smallskip
\noindent
\emph{Equality of the reduced classes.}
The constant map from $(\mathcal Z,\sigma_V)$ to the identity of
$SU(2)$ restricts to the maps $c_{v_0}$ and $c_{v_1}$ of
\cref{def:spin-trapping-invariant}.  Hence
\[
 [Z(v_0),\sigma_{v_0},c_{v_0}]
 =
 [Z(v_1),\sigma_{v_1},c_{v_1}]
\]
in $\Omega_4^{\Spin}(SU(2))$.  Subtracting this equality
from the preceding one yields
\[
 \nu_{X,Q}(v_0)=\nu_{X,Q}(v_1)
 \quad\text{in }\wt\Omega_4^{\Spin}(SU(2)).
\]
Since the transverse representatives were arbitrary, this proves both
homotopy invariance and independence of the choices in
\cref{def:spin-trapping-invariant}.
\end{proof}

\begin{remark}\label{rem:what-class-measures}
The invariant $\nu_{X,Q}$ associates a mapped spin-bordism class
to an unstable unitary component, but it is not asserted to be a
homomorphism.  The proof requires transversality of the total
section $V$ over $[0,1]\times B$, not of each section $V_t$.
Although $\mathcal Z$ is a smooth five-manifold, the projection
\[
 \mathcal Z\longrightarrow[0,1],
 \qquad (t,b)\longmapsto t,
\]
may have critical points, and a slice need not be a smooth
four-manifold.

For a local example, take $t,x\in\R$ and set
\[
 s_t(x)=x^2-t,
 \qquad
 s(t,x)=s_t(x).
\]
Since $\partial s/\partial t=-1$, the total map is transverse to
zero and
\[
 s^{-1}(0)=\{(x^2,x):x\in\R\}
\]
is a smooth parabola.  But $s_0$ is not transverse to zero: its
$x$-derivative vanishes at the origin.  There are no zeros for
$t<0$ and two for $t>0$.  Thus zeros can appear or disappear in
the slices of a smooth total zero set.

In the proof, the fixed transverse endpoint collars identify the
endpoint loci with the boundary of $\mathcal Z$.  The intermediate
loci may change topology and need not form a fiber bundle.  No
zero or component is followed through the homotopy.  The spin
structure and the protected block map on the entire $\mathcal Z$
give the bordism of endpoint cycles.
\end{remark}

\section{The finite-stage obstruction}\label{sec:finite-stage}

We compute the trapped class using explicit perturbations of the
identity and the distinguished unitary.  First consider
$X=\Gr_2(\C^4)$, the Grassmannian of complex two-planes in $\C^4$,
with its tautological rank-two bundle $Q=\zeta$.  A surjection
$L:\C^4\to\C^2$ defines a section
\[
 s(V)=L|_V
 \quad\text{of}\quad
 \Hom(\zeta,\thetaC^2).
\]
Its only zero is $V_0=\ker L$.  This zero is transverse and positive,
as verified in \cref{subsec:first-grassmannian-stage}.

Over $B=S^4\times X$, put $E=\thetaC^2\oplus\zeta$ and
$w=(u\circ\pr_{S^4})\oplus1_\zeta$.  Applying the unitary rotation
below to a small multiple of $s$ gives transverse representatives
of $1_E$ and $w$, both with trapping locus
\[
 S^4\times\{V_0\}.
\]
Their protected blocks on this copy of $S^4$ are $1_2$ and $u$,
respectively.  The first has zero reduced class, while the second
contributes $\eta_{\Spin}$.

Return now to $X$ and $Q$ as in \cref{sec:spin-invariant}.  A
transverse section may have several zeros, each giving a signed
copy of $S^4$.  The same calculation applies, with the Euler
number counting these copies.

\subsection{A unitary rotation}

The following standard rotation realizes a prescribed bundle map as the off-diagonal block of a unitary.

\begin{lemma}\label{lem:unitary-rotation}
Let $z:Q\to\thetaC^2$ be a smooth bundle map over $B$ with $\|z\|<1$.  Put
\begin{equation}\label{eq:Rz}
 R_z=
 \begin{pmatrix}
 (1-zz^*)^{1/2}&z\\
 -z^*&(1-z^*z)^{1/2}
 \end{pmatrix}
 \in\U\bigl(\Gamma(\End(\thetaC^2\oplus Q))\bigr).
\end{equation}
Then:
\begin{enumerate}[label=\textup{(\roman*)}]
\item $R_z$ is homotopic to the identity through the path $t\mapsto R_{tz}$.
\item $R_z$ is block diagonal exactly on $Z(z)$.
\item if $z$ is transverse to the zero section of $W$, then $R_z$ is transverse to $\mathcal D(E)$ and the induced normal identification agrees with $Dz$ up to the canonical identification in \cref{lem:normal-bundle-D}.
\end{enumerate}
If $g=g_0\oplus g_1$ is any smooth block-diagonal unitary section, then the upper-right block of $gR_z$ is $g_0z$.  Hence $gR_z$ has the same trapping locus as $R_z$, with the same transversality property.
\end{lemma}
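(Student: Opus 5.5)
We first check that $R_z$ is a unitary section, and then read off (i)--(iii) and the final statement directly from its block form.

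\emph{Unitarity.} Since $\|z\|<1$, the operators $1-zz^*$ and $1-z^*z$ are positive and invertible fiberwise. Their square roots are therefore smooth, by the smooth functional calculus on a neighbourhood of the spectrum bounded away from $0$. The key identity is the intertwining relation
\[
 z\,(1-z^*z)^{1/2}=(1-zz^*)^{1/2}z.
\]
It holds because $z\,p(z^*z)=p(zz^*)\,z$ for every polynomial $p$, and then passes to continuous functions by uniform approximation on $[0,\|z\|^2]$. Multiplying out $R_z^*R_z$ and $R_zR_z^*$ gives the identity: the diagonal blocks are $(1-zz^*)+zz^*$ and $z^*z+(1-z^*z)$, and the off-diagonal terms cancel by the intertwining relation. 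This computation is routine. The same argument shows that each $R_{tz}$ with $t\in[0,1]$ is unitary, since $\|tz\|<1$. The map $t\mapsto R_{tz}$ is norm continuous and satisfies $R_0=1$, which proves (i).

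\emph{Part (ii).} The upper-right block of $R_z$ is $z$ itself. By the observation preceding \cref{prop:relative-section-transversality}, a unitary is block diagonal exactly where its upper-right block vanishes. Hence the block-diagonal locus of $R_z$ is $Z(z)$.

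\emph{Part (iii).} Fix a zero $x$ of $z$. There $R_z(x)=1$, so left translation identifies $T_1U(E_x)$ with the skew-adjoint endomorphisms. The derivative of $R_z$ at $x$, applied to a tangent vector $\xi$, has off-diagonal part
\[
 \begin{pmatrix}0&D_xz(\xi)\\-D_xz(\xi)^*&0\end{pmatrix}=X_{D_xz(\xi)}.
\]
Indeed, the diagonal blocks $(1-zz^*)^{1/2}$ and $(1-z^*z)^{1/2}$ agree with $1$ to second order at a zero, so they contribute only to the tangent directions of $\mathcal D(E)$. Under \cref{lem:normal-bundle-D}, the normal component of $dR_z$ is therefore exactly $D_xz$. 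So surjectivity of $D_xz$ is the same as transversality of $R_z$ to $\mathcal D(E)$, and the two normal identifications agree. This is the one step that needs some care: one must use a local unitary frame of $Q$ so that $D_xz$ is intrinsic at zeros, and check that the identification $z\mapsto gX_z$ at $g=1$ matches the one used in the lemma.

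\emph{The final statement.} For $g=g_0\oplus g_1$, block multiplication shows that the upper-right block of $gR_z$ is $g_0z$. Since $g_0$ is fiberwise unitary, $g_0z$ and $z$ have the same zero set. At a zero, $D(g_0z)=g_0\,Dz$, because the term $(Dg_0)z$ vanishes there. Composing with the invertible map $g_0$ preserves surjectivity. Equivalently, left multiplication by the section $g$ maps $\mathcal D(E)$ to itself as a fiberwise diffeomorphism, so it preserves transversality. Hence $gR_z$ has the same trapping locus as $R_z$ and the same transversality property, and its normal identification is twisted by $g_0$. That twist agrees with the transition law $hX_zh^{-1}=X_{h_0zh_1^{-1}}$ of \cref{lem:normal-bundle-D}.
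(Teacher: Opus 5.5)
Your proposal is correct and follows the paper's proof essentially step for step. Both use the intertwining identity from functional calculus, the path $R_{tz}$, block-diagonality forced by the vanishing upper-right block, the normal derivative at a zero read off as $D_xz$, and block multiplication for $gR_z$.

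One aside in your last paragraph needs correcting. It does not affect the stated conclusion (same locus, same transversality). Use the identification $w\mapsto gX_w$ of \cref{lem:normal-bundle-D}. Left translation by $g^{-1}$ turns the vertical derivative $dg+g\,X_{D_xz(\xi)}$ of $gR_z$ into $g^{-1}dg+X_{D_xz(\xi)}$, and $g^{-1}dg$ is tangent to $\mathcal D(E)$. So the normal identification of $gR_z$ is exactly $D_xz$, the same as for $R_z$, and not a $g_0$-twist of it. The factor $g_0$ appears only in the raw upper-right-block coordinate $D(g_0z)=g_0\,Dz$, which suffices for transversality but is not the canonical identification. The law $hX_zh^{-1}=X_{h_0zh_1^{-1}}$ you invoke describes conjugation under a frame change, not left multiplication. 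The untwisted equality is what the paper records at this point. It is then used in \cref{prop:spin-class-formula} to conclude that $R$ and $wR$ induce the same spin structure on their common trapping locus.
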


\begin{proof}
Functional calculus gives
\[
 (1-zz^*)^{1/2}z=z(1-z^*z)^{1/2}.
\]
A direct block-matrix multiplication now shows that $R_z^*R_z=R_zR_z^*=1$.  The norm hypothesis is preserved under multiplication by $t\in[0,1]$, proving (i).  The upper-right block is exactly $z$.  For a unitary matrix, the vanishing of one off-diagonal block forces the other to vanish, so (ii) follows.  At a zero of $z$, the differential of the upper-right block is $Dz$, which is precisely the normal coordinate from \cref{lem:normal-bundle-D}, proving (iii).  The final assertion follows by block multiplication.  At a zero of $z$, left
translation by $g^{-1}$, as used in \cref{lem:normal-bundle-D}, removes the factor
$g_0$ from the upper-right normal derivative of $gR_z$.  The derivative of $g$ itself
is tangent to the block-diagonal subbundle.  Thus the same normal identification is
used for the two transverse loci.
\end{proof}

\subsection{The characteristic-number formula}

The top Chern class of $W=Q^*\oplus Q^*$ is
\begin{equation}\label{eq:top-chern-W}
 c_{2r}(W)=c_r(Q^*)^2=c_r(Q)^2.
\end{equation}
The sign from dualizing disappears on squaring.  The ordinary Euler
number is therefore
\begin{equation}\label{eq:Euler-computation}
 \left\langle e(W_{\R}),[X]\right\rangle
 =\left\langle c_{2r}(W),[X]\right\rangle
 =\left\langle c_r(Q)^2,[X]\right\rangle.
\end{equation}

\begin{proposition}\label{prop:spin-class-formula}
Let $X,Q,B,E$ be as in \cref{sec:spin-invariant}, let
$u:S^4\to SU(2)$ represent the nonzero element of $\pi_4(SU(2))$, and put
\[
 w=(u\circ\pr_{S^4})\oplus1_Q.
\]
Then
\begin{equation}\label{eq:spin-class-formula}
 \nu_{X,Q}(1_E)=0,
 \qquad
 \nu_{X,Q}(w)
 =\left\langle e(W_{\R}),[X]\right\rangle\,\eta_{\Spin}
 =\left\langle c_r(Q)^2,[X]\right\rangle\eta_{\Spin}.
\end{equation}
\end{proposition}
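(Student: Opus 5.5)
The plan is to compute both classes from explicit transverse representatives built with \cref{lem:unitary-rotation}, and then read off the trapped cycles directly. First choose a smooth section $s$ of $W=\Hom(Q,\thetaC^2)$ over $X$ that is transverse to the zero section. Its zeros $\xi_1,\dots,\xi_m$ are isolated, with signs $\epsilon_k$. By \eqref{eq:ordinary-Euler-zero-count} and \eqref{eq:Euler-computation},
\[
 \sum_k\epsilon_k=\langle e(W_{\R}),[X]\rangle=\langle c_r(Q)^2,[X]\rangle.
\]
Scale $s$ so that $\|s\|<1$, and pull it back to $B$ as $z=\pr_X^*s$. Since $z$ has no dependence on the $S^4$ variable, its zero set is $\bigsqcup_k S^4\times\{\xi_k\}$. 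It is transverse because $D z$ restricted to the $X$-directions is $D s$, which is surjective onto $W_{\xi_k}$. By \cref{lem:unitary-rotation}(i) we have $R_z\sim_h 1_E$. Multiplying that path by the block-diagonal section $g=(u\circ\pr_{S^4})\oplus 1_Q$ gives $w=g\cdot 1_E\sim_h gR_z$. By the last assertion of the lemma, both $R_z$ and $gR_z$ are transverse to $\mathcal D(E)$, they have the same trapping locus $Z=\bigsqcup_k S^4\times\{\xi_k\}$, and they carry the same normal identification.

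Next I identify the induced spin structure and orientation on each component. Along $S^4\times\{\xi_k\}$, the tangent–normal splitting is
\[
 TB|=TS^4\oplus T_{\xi_k}X,
\]
and the normal identification is the constant isomorphism $D_{\xi_k}s:T_{\xi_k}X\to W_{\xi_k}$. Since $W_\R$ and $TX$ have unique spin structures (simple connectivity), restricting to a point gives spin structures on the trivial bundle $\underline{T_{\xi_k}X}$ that are constant over $S^4$. The isomorphism $D_{\xi_k}s$ either matches these orientations or reverses them, according to $\epsilon_k$. Applying \cref{lem:spin-two-out-of-three} to the product splitting, the induced structure $\sigma$ on $S^4\times\{\xi_k\}$ is the unique spin structure on $S^4$ with orientation $\epsilon_k$. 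Here I need the point that an orientation-preserving isomorphism between two spin structures on a vector space, constant over a simply connected base, gives isomorphic lifts, so only the sign survives. Checking this orientation and sign bookkeeping is the step I expect to be the main obstacle. In particular I must verify that reversing the normal orientation reverses the spin orientation of $Z$, yielding $-S^4$ in the sense of \cref{def:spin-bordism}, rather than some other spin structure. Because $S^4$ has a unique spin structure for each orientation, no further ambiguity can arise.

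Finally, I compute the protected blocks. On $Z$, the upper-left block of $R_z$ is $(1-zz^*)^{1/2}=1_2$, so $a=1_2$, $f=c$, and the reduced class is $0$. This gives $\nu_{X,Q}(1_E)=0$ by \cref{thm:trapping-invariant-well-defined}. For $gR_z$ the upper-left block is $u\circ\pr_{S^4}$, which lies in $SU(2)$, so $\rho$ fixes it and $f=u$ on each component. Hence
\[
 \nu_{X,Q}(w)=\sum_k[\epsilon_k S^4,u]_{\red}=\Bigl(\sum_k\epsilon_k\Bigr)\eta_{\Spin},
\]
using that disjoint union is addition and orientation reversal is negation in $\Omega^{\Spin}_4(SU(2))$. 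Combining this with the Euler number identity above gives \eqref{eq:spin-class-formula}. Since $\eta_{\Spin}$ has order two, only the parity of $\sum_k\epsilon_k$ actually matters, which also makes the sign conventions harmless in the end.
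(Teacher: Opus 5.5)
Your proposal is correct and follows essentially the same route as the paper. You choose a transverse section $s$ of $W$, and the rotations $R_{\varepsilon s}$ and $wR_{\varepsilon s}$ then share the trapping locus $S^4\times Z(s)$ and its normal identification; uniqueness of the spin structure on $S^4$ for each orientation and the signed zero count finish the argument, with the orientation bookkeeping you worry about ultimately harmless because $\eta_{\Spin}$ has order two.
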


\begin{proof}
Choose a smooth section
\[
 s\in\Gamma(X,W)
\]
transverse to the zero section.  Since
$\dim_{\R}X=\rank_{\R}W$, its zero set is finite.
Pull $s$ back to $B=S^4\times X$, choose $\varepsilon>0$
so that $\|\varepsilon s\|<1$, and set
\[
 R=R_{\varepsilon s}.
\]
By \cref{lem:unitary-rotation}, the paths
\[
 t\longmapsto R_{t\varepsilon s},
 \qquad
 t\longmapsto wR_{t\varepsilon s}
\]
join $1_E$ to $R$ and $w$ to $wR$, respectively.
Both $R$ and $wR$ are transverse to $\mathcal D(E)$.
Their relevant features are:
\begin{center}
\begin{tabular}{@{}lcc@{}}
\toprule
 & $R$ & $wR$\\
\midrule
Homotopic to
 & $1_E$ & $w$\\
Upper-right block
 & $\varepsilon s$
 & $(u\circ\pr_{S^4})\varepsilon s$\\
Trapping locus
 & $S^4\times Z(s)$
 & $S^4\times Z(s)$\\
Protected block on each copy
 & $1_2$ & $u$\\
\bottomrule
\end{tabular}
\end{center}

To compare the induced spin structures on this common locus, fix
$(y,\xi)\in S^4\times Z(s)$.  In the $X$ directions, the
upper-right derivatives of $R$ and $wR$ are
\[
 \varepsilon D_\xi s
 \qquad\text{and}\qquad
 u(y)\varepsilon D_\xi s,
\]
respectively.  In the $S^4$ directions these derivatives
vanish, since $s(\xi)=0$.

At this point $R=1_E$ and $wR=u(y)\oplus1_{Q_\xi}$.
The normal coordinate of \cref{lem:normal-bundle-D}
uses left translation by the inverse of the corresponding
block-diagonal unitary.  For $wR$, it therefore removes
the factor $u(y)$:
\[
 u(y)^{-1}\bigl(u(y)\varepsilon D_\xi s\bigr)
 =\varepsilon D_\xi s.
\]
The normal identifications therefore agree, as in the proof of
\cref{lem:unitary-rotation}.  They transport the fixed spin structure
on $W_{\R}$ to the same normal spin structure.  Since the ambient
spin structure is also the same, the induced spin structures on the
common trapping locus agree.

For each $\xi\in Z(s)$, the normal derivative
\[
 \varepsilon D_\xi s:T_\xi X\longrightarrow W_\xi
\]
is an isomorphism with orientation sign $\operatorname{sign}_\xi(s)$,
since $\varepsilon>0$.  The tangent-normal convention gives
$S^4\times\{\xi\}$ the standard orientation at a positive zero
and the opposite orientation at a negative one.  For either
orientation, $S^4$ has a unique spin structure.  Each zero thus
contributes a signed copy of the same spin four-manifold.

It remains to compare the maps carried by these copies.
For $R$, the protected block is constantly $1_2$.
The mapped cycle and its constant-map counterpart in
\cref{def:spin-trapping-invariant} are therefore
identical, so
\[
 \nu_{X,Q}(1_E)=\nu_{X,Q}(R)=0.
\]
For $wR$, the protected block on every copy is $u$.
Since $\rho$ fixes $SU(2)$ pointwise, the map used in
the invariant is also $u$.  Additivity under disjoint
union of spin-bordism cycles gives
\[
 \begin{aligned}
  \nu_{X,Q}(w)
   &=\nu_{X,Q}(wR)\\
   &=\sum_{\xi\in Z(s)}
       \operatorname{sign}_\xi(s)\,[S^4,u]_{\red}.
 \end{aligned}
\]

Finally, the signed zero count is the ordinary Euler
number:
\[
 \sum_{\xi\in Z(s)}\operatorname{sign}_\xi(s)
 =\left\langle e(W_{\R}),[X]\right\rangle
 =\left\langle c_r(Q)^2,[X]\right\rangle,
\]
where the last equality is \eqref{eq:Euler-computation}.
Together with
$[S^4,u]_{\red}=\eta_{\Spin}$ from
\eqref{eq:spin-Hopf-class}, this proves
\eqref{eq:spin-class-formula}.
\end{proof}

\begin{theorem}\label{thm:finite-stage}
Let $X$ be a closed, connected, simply connected spin manifold of real dimension $4r$, and
let $Q\to X$ be a Hermitian complex vector bundle of rank $r$.  Suppose that
\begin{equation}\label{eq:odd-characteristic-number}
 \left\langle c_r(Q)^2,[X]\right\rangle\equiv1\pmod2.
\end{equation}
Set
\[
 B=S^4\times X,
 \qquad
 E=\thetaC^2\oplus Q,
 \qquad
 A=\Gamma(\End E),
\]
and let
\begin{equation}\label{eq:distinguished-stage-unitary}
 w=(u\circ\pr_{S^4})\oplus1_Q\in\U(A).
\end{equation}
Then
\[
 w\notin\Uzero(A),
 \qquad
 [w]_1=0\in K_1(A).
\]
In particular, $A$ is not $K_1$-injective.
\end{theorem}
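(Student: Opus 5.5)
The theorem has two halves, and I would prove them separately: the non-membership $w\notin\Uzero(A)$ from the trapped spin-bordism class, and the vanishing of $[w]_1$ in $K_1(A)$ by stabilization.

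\emph{Non-membership.} I would argue by contradiction. Suppose $w\in\Uzero(A)$. Then $w$ and $1_E$ lie in the same path component of $\U(A)$. By \cref{thm:trapping-invariant-well-defined}, $\nu_{X,Q}$ is constant on path components, so $\nu_{X,Q}(w)=\nu_{X,Q}(1_E)$. \Cref{prop:spin-class-formula} computes both sides: the right-hand side is $0$, and the left-hand side is $\langle c_r(Q)^2,[X]\rangle\,\eta_{\Spin}$. The target $\wt\Omega_4^{\Spin}(SU(2))\cong\Z/2$ has exponent two, so an odd multiple of $\eta_{\Spin}$ equals $\eta_{\Spin}$. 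By \eqref{eq:odd-characteristic-number} the multiplier is odd, hence $\nu_{X,Q}(w)=\eta_{\Spin}$. This is nonzero by \cref{lem:spin-detects-hopf}, giving the contradiction.

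\emph{Vanishing in $K_1(A)$.} It suffices to exhibit a path from $w\oplus 1_E$ to $1$ in $\U_2(A)=\U(\Gamma(\End(E\oplus E)))$. Write
\[
 w\oplus1_E=(u\circ\pr_{S^4})\oplus1_Q\oplus1_{\thetaC^2}\oplus1_Q.
\]
After conjugating by a constant permutation of summands, which can be joined to the identity through constant unitaries along with its inverse, this element becomes $\bigl((u\oplus1_2)\circ\pr_{S^4}\bigr)\oplus1_{Q\oplus Q}$ on $\thetaC^4\oplus Q\oplus Q$. By \cref{lem:unstable-seed}(ii) there is a null-homotopy $H_t:S^4\to U(4)$ of $u\oplus 1_2$. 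Then $(H_t\circ\pr_{S^4})\oplus1_{Q\oplus Q}$ is a continuous path of unitary sections ending at $1$. Therefore $[w]_1=0$.

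Since $w$ is a nontrivial element of $\U(A)/\Uzero(A)$ lying in the kernel of $\kappa_A$, the algebra $A$ is not $K_1$-injective.

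The only point needing care is the permutation step: the trivial summand $\thetaC^2$ of the second copy of $E$ must be brought next to the first $\thetaC^2$. The permutation is a block-permutation unitary over the bundle $\thetaC^2\oplus Q\oplus\thetaC^2\oplus Q$ that swaps the middle $Q$ with the second $\thetaC^2$. These two summands have different ranks, so I would not use a swap homotopy. Instead I would note that conjugation by any unitary preserves the null-homotopy property. It is therefore enough to null-homotope $P(w\oplus1_E)P^*$, which has the desired form, and then conjugate the resulting path back by $P$. This avoids connecting $P$ to the identity at all.
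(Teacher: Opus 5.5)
Your proposal is correct and follows the paper's proof essentially step for step. Non-membership comes from homotopy invariance of $\nu_{X,Q}$ (\cref{thm:trapping-invariant-well-defined}) together with \cref{prop:spin-class-formula}, \cref{lem:spin-detects-hopf} and the odd multiplier, and $[w]_1=0$ comes from regrouping $E\oplus E$ as $\thetaC^4\oplus Q\oplus Q$ and extending the null-homotopy of $u\oplus1_2$ by the identity on $Q\oplus Q$. Your extra care about the permutation is harmless but not needed: the regrouping is just a bundle isomorphism, so it induces a $*$-isomorphism that transports paths, and your earlier remark about joining it to the identity never comes into play.
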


\begin{proof}
By \cref{prop:spin-class-formula,lem:spin-detects-hopf} and
\eqref{eq:odd-characteristic-number},
\[
 \nu_{X,Q}(w)=\eta_{\Spin}\neq0,
 \qquad
 \nu_{X,Q}(1_E)=0.
\]
Homotopy invariance of the trapped class therefore gives $w\notin\Uzero(A)$.

For the stable complex $K_1$-class, reorder the summands of
\[
 E\oplus E
 =\thetaC^2\oplus Q\oplus\thetaC^2\oplus Q
\]
to obtain $\thetaC^4\oplus Q\oplus Q$.  Under this permutation,
$w\oplus1_E$ becomes
\[
 (u\oplus1_2)\oplus1_{Q\oplus Q}.
\]
By \cref{lem:unstable-seed}, $u\oplus1_2$ is null-homotopic in $U(4)$.  Extending the
homotopy by the identity on $Q\oplus Q$ shows that $w\oplus1_E$ is null-homotopic in
$\U(M_2(A))$.  Hence $[w]_1=0$ in $K_1(A)$.
\end{proof}

\begin{remark}
The hypothesis in \eqref{eq:odd-characteristic-number} says exactly that the ordinary Euler
number of the normal bundle is odd.  Formula \eqref{eq:spin-class-formula} then reduces this
integer modulo two because $\eta_{\Spin}$ has order two in spin bordism.
\end{remark}

\subsection{The first Grassmannian stage}
\label{subsec:first-grassmannian-stage}

For the example at the beginning of this section, put
$X=\Gr_2(\C^4)$ and let $\zeta\to X$ be its tautological rank-two
bundle.  A surjection $L:\C^4\to\C^2$
defines the section $s(V)=L|_V$ of
$\Hom(\zeta,\thetaC^2)\cong\zeta^*\oplus\zeta^*$.
It vanishes exactly at $V_0=\ker L$, where its derivative
\[
 \Hom(V_0,\C^4/V_0)\longrightarrow\Hom(V_0,\C^2)
\]
is the isomorphism induced by $L$.  Thus the zero is
transverse, the ordinary Euler number is one, and the
trapped class is exactly $\eta_{\Spin}$.

\begin{corollary}\label{cor:first-homogeneous-example}
Let $\zeta$ be the tautological rank-two bundle over $\Gr_2(\C^4)$ and set
\[
 A_0=\Gamma\!\left(\End(\thetaC^2\oplus\zeta)\right)
\]
over $S^4\times\Gr_2(\C^4)$.  Then the rank-two corner embedding
\[
 \iota:M_2(C(S^4))\longrightarrow A_0,
 \qquad
 \iota(f)=f\oplus0_\zeta,
\]
preserves the failure of $K_1$-injectivity: if $u$ is as in
\cref{lem:unstable-seed}, then
\[
 \iota(u)+1_{A_0}-\iota(1)=u\oplus1_\zeta
\]
is not null-homotopic but has zero $K_1$-class.
\end{corollary}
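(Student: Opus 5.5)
The corollary will follow directly from \cref{thm:finite-stage} applied with $r=2$, $X=\Gr_2(\C^4)$ and $Q=\zeta$. My plan has three steps: check the hypotheses of that theorem, verify the odd characteristic number, and identify the unitary in the corollary with the distinguished unitary $w$.

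\emph{Hypotheses on $X$.} The Grassmannian $\Gr_2(\C^4)$ is a closed, connected complex manifold of real dimension $8=4r$. It is simply connected because its cell decomposition has only even-dimensional Schubert cells. It is spin because $c_1(T\Gr_2(\C^4))=4x$, with $x$ the hyperplane generator of $H^2$. Hence $w_2\equiv c_1\equiv 0 \pmod 2$ by \eqref{eq:w2-complex-bundle}.

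\emph{Odd characteristic number.} For \eqref{eq:odd-characteristic-number} I will not compute Schubert calculus directly. Instead I use the section $s(V)=L|_V$ of $W=\Hom(\zeta,\thetaC^2)$ described in \cref{subsec:first-grassmannian-stage}. It has a single zero at $V_0=\ker L$, and there its derivative is the complex-linear isomorphism $\Hom(V_0,\C^4/V_0)\to\Hom(V_0,\C^2)$ induced by $L$. A complex-linear isomorphism preserves the canonical orientations, so $\operatorname{sign}_{V_0}(s)=+1$. Then \eqref{eq:ordinary-Euler-zero-count} together with \eqref{eq:Euler-computation} gives $\langle c_2(\zeta)^2,[X]\rangle=1$, which is odd.

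The step needing the most care is exactly this one. One must be sure that the tangent identification $T_{V_0}\Gr_2(\C^4)\cong\Hom(V_0,\C^4/V_0)$ is complex-linear for the complex orientation of $X$. One must also check that the derivative of $s$ at $V_0$ really is $\phi\mapsto L\circ\phi$, computed in a graph chart $V=\{v+\phi v\}$ using $L|_{V_0}=0$. Both are standard, and only the parity is needed anyway, so a sign error would be harmless.

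\emph{Identifying the unitary and concluding.} Observe that $\iota(u)+1_{A_0}-\iota(1)=(u\oplus0_\zeta)+1_E-(1_2\oplus0_\zeta)=u\oplus1_\zeta$, which is exactly $w=(u\circ\pr_{S^4})\oplus1_\zeta$ of \eqref{eq:distinguished-stage-unitary}. \Cref{thm:finite-stage} then gives $w\notin\Uzero(A_0)$ and $[w]_1=0$, which is the statement of the corollary.
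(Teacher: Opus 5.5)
Your proposal is correct and matches the paper's proof: check that $\Gr_2(\C^4)$ with $Q=\zeta$ satisfies the hypotheses, note that $\iota(u)+1_{A_0}-\iota(1)=w$, and apply \cref{thm:finite-stage}. The one difference is the Chern number: the paper's proof gets $\langle c_2(\zeta)^2,[\Gr_2(\C^4)]\rangle=1$ from the Pieri-rule computation in \cref{lem:grassmannian-package}, whereas you use the single transverse zero of $s(V)=L|_V$, an argument the paper also records in \cref{subsec:first-grassmannian-stage} and which, as you note, needs only transversality since the theorem uses only the parity.
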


\begin{proof}
The Grassmannian is simply connected and spin, has real dimension eight, and
\[
 \left\langle c_2(\zeta)^2,[\Gr_2(\C^4)]\right\rangle=1.
\]
These facts are proved in \cref{lem:grassmannian-package}.  Apply
\cref{thm:finite-stage}.
\end{proof}

\begin{remark}\label{rem:minimal-rank-two}
The rank-one critical case cannot have odd Euler number.  If $X$ is a closed spin
four-manifold and $L\to X$ is a complex line bundle, the Wu formula \cite{MilnorStasheff74} gives
\[
 \left\langle c_1(L)^2,[X]\right\rangle
 \equiv
 \left\langle c_1(L)w_2(TX),[X]\right\rangle
 \equiv0\pmod2.
\]
Thus $r=2$ is the smallest possible rank in \cref{thm:finite-stage}, and
$\Gr_2(\C^4)$ realizes the corresponding smallest critical dimension.
\end{remark}

\section{The Grassmannian recursion}\label{sec:grassmannians}

To apply the finite-stage obstruction throughout an inductive system,
we must preserve the relation $\dim_{\R}X_i=4\rank(Q_i)$, which makes
the trapping loci four-dimensional.  We also need $X_i$ to be spin
and simply connected: together with \eqref{eq:w2-W-zero}, these
conditions give the ambient and normal spin structures used to define
the invariant.  The remaining requirement is an odd characteristic
number, so that \cref{prop:spin-class-formula} gives a nonzero class.

We will keep the characteristic number equal to one.  The spaces
will also have finite CW decompositions with only even-dimensional
cells, giving $K_1(A_i)=0$ by \cref{prop:K1-zero}.  The following
Grassmannians have all the required properties.

For $d\geq1$, put
\begin{equation}\label{eq:Gd-definition}
 G(d)=\Gr_2(\C^{2d+2}),
\end{equation}
and let $\zeta_d\to G(d)$ be the tautological rank-two bundle.

\begin{lemma}\label{lem:grassmannian-package}
For every $d\geq1$:
\begin{enumerate}[label=\textup{(\roman*)}]
\item $G(d)$ is a closed, connected, simply connected complex manifold of complex dimension $4d$.
\item $G(d)$ is spin.
\item
\begin{equation}\label{eq:grassmannian-number}
 \left\langle c_2(\zeta_d)^{2d},[G(d)]\right\rangle=1.
\end{equation}
\item $G(d)$ has a finite CW decomposition with only even-dimensional cells.
\end{enumerate}
\end{lemma}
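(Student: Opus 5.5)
The four items are standard facts about $G(d)=\Gr_2(\C^{n})$ with $n=2d+2$. I will prove them in the order (iv), (i), (ii), (iii), since the cell structure gives simple connectivity for free. The characteristic number in (iii) I will compute by counting zeros, exactly as in \cref{subsec:first-grassmannian-stage}.

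For (iv), I will use the Schubert cell decomposition of $\Gr_2(\C^n)$ relative to a fixed complete flag. Each Schubert cell is a complex affine space $\C^{m}$, so all cells have even real dimension. There is a single $0$-cell and there are finitely many cells. For (i), $G(d)$ is a compact, connected complex homogeneous manifold $U(n)/(U(2)\times U(n-2))$. Its complex dimension is $2(n-2)=4d$. Simple connectivity follows from (iv): a CW complex with one $0$-cell and no $1$-cells has trivial $\pi_1$ by cellular approximation.

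For (ii), I will use $T\Gr_2(\C^n)\cong\Hom(\zeta_d,\zeta_d^\perp)$, where $\zeta_d^\perp$ is the complementary bundle. This gives
\[
 c_1(TG(d))=2c_1(\zeta_d^\perp)-(n-2)c_1(\zeta_d)=-n\,c_1(\zeta_d),
\]
using $c_1(\zeta_d)+c_1(\zeta_d^\perp)=0$ because $\zeta_d\oplus\zeta_d^\perp$ is trivial. Since $n=2d+2$ is even, \eqref{eq:w2-complex-bundle} gives $w_2(TG(d))\equiv c_1(TG(d))\equiv0\pmod2$, so $G(d)$ is spin. By (i) the spin structure is unique.

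For (iii), the sign from dualizing disappears because $\zeta_d$ has rank two, so $c_2(\zeta_d)=c_2(\zeta_d^*)$. Hence $c_2(\zeta_d)^{2d}=c_{4d}\bigl((\zeta_d^*)^{\oplus 2d}\bigr)$, and
\[
 (\zeta_d^*)^{\oplus2d}\cong\Hom(\zeta_d,\thetaC^{2d}).
\]
This bundle has real rank $8d$, which equals $\dim_\R G(d)$. I will therefore evaluate the number as an Euler number by \eqref{eq:ordinary-Euler-zero-count}. Choose a surjection $L:\C^{2d+2}\to\C^{2d}$ and take the section $s(V)=L|_V$. It vanishes exactly when $V\subseteq\ker L$. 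Since $\ker L$ is two-dimensional, the only zero is $V_0=\ker L$. At $V_0$ the derivative
\[
 \Hom(V_0,\C^{2d+2}/V_0)\to\Hom(V_0,\C^{2d})
\]
is composition with the isomorphism $\C^{2d+2}/V_0\cong\C^{2d}$ induced by $L$. This map is bijective and complex linear, so the zero is transverse and has sign $+1$, and the pairing equals $1$. The only delicate point is this orientation/sign check, which complex linearity settles. For $d=1$ the argument recovers the computation in \cref{subsec:first-grassmannian-stage}.
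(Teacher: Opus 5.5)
Your proof is correct. For (i), (ii) and (iv) it is essentially the paper's argument. The Schubert cells give an even-cell decomposition, hence connectedness and simple connectivity. Your $c_1(TG(d))=-(2d+2)c_1(\zeta_d)$ is the paper's $(2d+2)c_1(\zeta_d^*)$, which is even.

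The genuine difference is in (iii). The paper identifies $c_2(\zeta_d)=c_2(\zeta_d^*)$ with the Schubert class $\sigma_{1,1}$. It then uses the vertical Pieri rule $\sigma_{1,1}^j=\sigma_{j,j}$, citing Fulton, to reach the point class $\sigma_{2d,2d}$. You instead write $c_2(\zeta_d)^{2d}$ as the top Chern class of $\Hom(\zeta_d,\thetaC^{2d})\cong(\zeta_d^*)^{\oplus 2d}$. You then count the unique zero of $V\mapsto L|_V$ using \eqref{eq:ordinary-Euler-zero-count}. That zero is transverse, and it is positive because the derivative is a complex-linear isomorphism. This is the computation of \cref{subsec:first-grassmannian-stage} with $\C^2$ replaced by $\C^{2d}$.

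What each route buys:
\begin{itemize}
\item Your route stays inside the paper's own Euler-number toolkit. It avoids pinning down Schubert conventions, such as which Chern class is $\sigma_{1,1}$ and which form of the Pieri rule applies. The cost is a short local-chart check that the derivative at $\ker L$ is composition with the isomorphism $\C^{2d+2}/\ker L\cong\C^{2d}$.
\item The paper's route is a one-line citation. It also yields the stronger intermediate identities $\sigma_{1,1}^j=\sigma_{j,j}$, which are not needed.
\end{itemize}

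Geometrically the two routes coincide. $\sigma_{1,1}$ is the class of $\{V\subseteq H\}$ for a hyperplane $H$, which is the zero locus of a single coordinate of your section. Its $2d$-fold power is the transverse intersection of $2d$ such loci, and that intersection is exactly your single zero $\ker L$.
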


\begin{proof}
The complex dimension of $\Gr_2(\C^n)$ is $2(n-2)$, which is $4d$ for $n=2d+2$.  The Schubert cell decomposition has one zero-cell and only even real-dimensional cells.  This proves connectedness, simple connectedness, and (iv).

Let $\eta_d$ denote the tautological quotient bundle of rank $2d$.  The tangent bundle is
\[
 TG(d)\cong\Hom(\zeta_d,\eta_d)
 \cong\zeta_d^*\otimes\eta_d.
\]
Since $\zeta_d\oplus\eta_d$ is trivial,
\[
 c_1(\eta_d)=c_1(\zeta_d^*).
\]
Consequently
\begin{align*}
 c_1(TG(d))
 &=2d\,c_1(\zeta_d^*)+2c_1(\eta_d)\\
 &=(2d+2)c_1(\zeta_d^*).
\end{align*}
This class is even.  For a complex manifold, $w_2(TG(d))$ is the mod-two reduction of $c_1(TG(d))$, and hence $G(d)$ is spin.

For (iii), use the standard Schubert convention
\[
 c_2(\zeta_d^*)=\sigma_{1,1}.
\]
Since $c_2(\zeta_d^*)=c_2(\zeta_d)$, the vertical Pieri rule gives
\[
 \sigma_{1,1}^{j}=\sigma_{j,j}
 \qquad(0\leq j\leq2d).
\]
The class $\sigma_{2d,2d}$ is the class of a point, with integral one.  This proves \eqref{eq:grassmannian-number}.  We refer to Fulton \cite{Fulton97} for these Schubert-calculus conventions and the Pieri rule.
\end{proof}

Suppose the bundle at stage $i$ has rank $N_i=r_i+2$, and the next
connecting map requires $m_i$ point evaluations.  Each acts on an
$N_i$-dimensional Hermitian space $V_{i,\ell}$.  Tensoring with a
rank-two tautological multiplicity bundle adds $2N_i$ to the
complementary rank, for a total increase of $2m_iN_i$.
To preserve the dimension relation, we must add $8m_iN_i$ real
dimensions to the base.  We therefore take
\[
 d_i=m_iN_i,
 \qquad G(d_i)=\Gr_2(\C^{2d_i+2}),
 \qquad \dim_{\R}G(d_i)=8m_iN_i.
\]
By \eqref{eq:grassmannian-number}, this factor contributes a
multiplier of one to the characteristic number.  The resulting
recursion is as follows.

\begin{proposition}\label{prop:recursive-bundles}
Suppose that $X_i$ is a closed, connected, simply connected spin manifold and that $Q_i\to X_i$ is a complex vector bundle of rank $r_i$ such that
\begin{equation}\label{eq:recursive-hypotheses}
 \dim_{\R}X_i=4r_i,
 \qquad
 \left\langle c_{r_i}(Q_i)^2,[X_i]\right\rangle=1.
\end{equation}
Put
\[
 N_i=r_i+2.
\]
Let $m_i\geq1$, let
\[
 V_{i,1},\ldots,V_{i,m_i}
\]
be Hermitian vector spaces of dimension $N_i$, set $d_i=m_iN_i$, and define
\begin{align}
 X_{i+1}&=X_i\times G(d_i),\label{eq:Xi-recursion}\\
 Q_{i+1}&=\pr_{X_i}^*Q_i
 \oplus
 \bigoplus_{\ell=1}^{m_i}
 \left(V_{i,\ell}\otimes\pr_{G(d_i)}^*\zeta_{d_i}\right).
 \label{eq:Qi-recursion}
\end{align}
Then $X_{i+1}$ is closed, connected, simply connected, and spin, and if
\[
 r_{i+1}=\rank(Q_{i+1}),
 \qquad
 N_{i+1}=r_{i+1}+2,
\]
then
\begin{align}
 r_{i+1}&=r_i+2m_iN_i,\label{eq:r-recursion}\\
 N_{i+1}&=(2m_i+1)N_i,\label{eq:N-recursion}\\
 \dim_{\R}X_{i+1}&=4r_{i+1},\label{eq:dimension-recursion}\\
 \left\langle c_{r_{i+1}}(Q_{i+1})^2,[X_{i+1}]\right\rangle&=1.
 \label{eq:chern-number-recursion}
\end{align}
Moreover, if $X_i$ has a finite CW decomposition with only even-dimensional cells, then so does $X_{i+1}$.
\end{proposition}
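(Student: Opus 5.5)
The plan is to verify each conclusion of \cref{prop:recursive-bundles} in turn, using \cref{lem:grassmannian-package} for the new factor $G(d_i)$ and product formulas for Chern classes.

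\emph{Topology and dimensions.} Since $X_i$ and $G(d_i)$ are closed, connected and simply connected (\cref{lem:grassmannian-package}(i)), so is their product. Spin follows from
\[
 w_2\bigl(T(X_i\times G(d_i))\bigr)=\pr^*w_2(TX_i)+\pr^*w_2(TG(d_i))=0,
\]
using \cref{lem:grassmannian-package}(ii). Products of finite CW complexes with only even-dimensional cells again have only even-dimensional cells, giving the final assertion.

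For the ranks, each summand $V_{i,\ell}\otimes\zeta_{d_i}$ has rank $2N_i$, so $r_{i+1}=r_i+2m_iN_i$. Then
\[
 N_{i+1}=r_i+2+2m_iN_i=(2m_i+1)N_i.
\]
The dimension count is
\[
 \dim_\R X_{i+1}=4r_i+8d_i=4r_i+8m_iN_i=4r_{i+1}.
\]

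\emph{The characteristic number.} This is the main step. By the Whitney formula, the top Chern class of a direct sum is the product of the top Chern classes. Since $V_{i,\ell}$ is a trivial bundle of rank $N_i$, we have $V_{i,\ell}\otimes\zeta_{d_i}\cong\zeta_{d_i}^{\oplus N_i}$, whose top class is $c_2(\zeta_{d_i})^{N_i}$. Hence
\[
 c_{r_{i+1}}(Q_{i+1})=\pr_{X_i}^*c_{r_i}(Q_i)\cdot\pr_{G}^*c_2(\zeta_{d_i})^{m_iN_i}
 =\pr_{X_i}^*c_{r_i}(Q_i)\cdot\pr_{G}^*c_2(\zeta_{d_i})^{d_i}.
\]
Squaring gives
\[
 \pr_{X_i}^*c_{r_i}(Q_i)^2\cdot\pr_G^*c_2(\zeta_{d_i})^{2d_i}.
\]
All classes here have even degree, so no signs arise from commuting them.

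It remains to evaluate on the product fundamental class $[X_{i+1}]=[X_i]\times[G(d_i)]$, taken with the product orientation. The Künneth/cross-product formula splits the pairing:
\[
 \bigl\langle c_{r_{i+1}}(Q_{i+1})^2,[X_{i+1}]\bigr\rangle
 =\bigl\langle c_{r_i}(Q_i)^2,[X_i]\bigr\rangle\cdot\bigl\langle c_2(\zeta_{d_i})^{2d_i},[G(d_i)]\bigr\rangle.
\]
The first factor is $1$ by hypothesis \eqref{eq:recursive-hypotheses}, and the second is $1$ by \eqref{eq:grassmannian-number}. The degrees match, since $\deg c_{r_i}(Q_i)^2=4r_i=\dim X_i$ and $\deg c_2^{2d_i}=8d_i=\dim G(d_i)$.

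The only delicate points are orientation conventions and the rank count $d_i=m_iN_i$; I expect no genuine obstruction.
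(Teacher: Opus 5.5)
Your proposal is correct and follows the paper's proof essentially step by step. Both arguments get the topological assertions from \cref{lem:grassmannian-package} and permanence under products, the rank and dimension counts from $\rank(V_{i,\ell}\otimes\zeta_{d_i})=2N_i$ and $\dim_{\R}G(d_i)=8d_i$, and the characteristic number from multiplicativity of top Chern classes, $V_{i,\ell}\otimes\zeta_{d_i}\cong\zeta_{d_i}^{\oplus N_i}$, and evaluation on $[X_i]\times[G(d_i)]$. Your extra details are sound refinements of steps the paper leaves implicit: the $w_2$ computation (the cross term $w_1\cdot w_1$ vanishes because both factors are oriented) and the absence of signs because all degrees are even.
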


\begin{proof}
The first assertions follow from \cref{lem:grassmannian-package} and permanence of the stated properties under products.  Each bundle
\[
 V_{i,\ell}\otimes\zeta_{d_i}
\]
has rank $2N_i$, so \eqref{eq:r-recursion} follows.  Then
\[
 N_{i+1}=r_i+2+2m_iN_i=(2m_i+1)N_i,
\]
which proves \eqref{eq:N-recursion}.  By \cref{lem:grassmannian-package},
\[
 \dim_{\R}G(d_i)=8d_i=8m_iN_i,
\]
and therefore
\[
 \dim_{\R}X_{i+1}
 =4r_i+8m_iN_i
 =4r_{i+1}.
\]

After choosing a basis of $V_{i,\ell}$ solely for the purpose of calculating characteristic classes, the bundle $V_{i,\ell}\otimes\zeta_{d_i}$ is isomorphic to $\zeta_{d_i}^{\oplus N_i}$.  Hence its top Chern class is $c_2(\zeta_{d_i})^{N_i}$.  The top Chern class of a direct sum is the product of the top Chern classes of its summands, and therefore
\[
 c_{r_{i+1}}(Q_{i+1})
 =\pr_{X_i}^*c_{r_i}(Q_i)\,
  \pr_{G(d_i)}^*c_2(\zeta_{d_i})^{m_iN_i}.
\]
Squaring and evaluating on the product fundamental class gives
\begin{align*}
 \left\langle c_{r_{i+1}}(Q_{i+1})^2,[X_{i+1}]\right\rangle
 &=
 \left\langle c_{r_i}(Q_i)^2,[X_i]\right\rangle
 \left\langle c_2(\zeta_{d_i})^{2d_i},[G(d_i)]\right\rangle\\
 &=1
\end{align*}
by \eqref{eq:recursive-hypotheses} and \cref{lem:grassmannian-package}.  The final assertion follows by taking the product of the even-cell decompositions.
\end{proof}

\begin{construction}\label{con:recursive-data}
Start with
\[
 X_1=G(1)=\Gr_2(\C^4),
 \qquad
 Q_1=\zeta_1,
 \qquad
 r_1=2,
 \qquad
 N_1=4.
\]
At stage $i$, choose a positive integer $m_i$ and Hermitian vector spaces
\[
 V_{i,1},\ldots,V_{i,m_i}
\]
of dimension $N_i$, and apply \cref{prop:recursive-bundles}.  Then, for every $i$,
\begin{equation}\label{eq:all-recursive-properties}
\begin{gathered}
 \dim_{\R}X_i=4r_i,\qquad X_i\text{ is simply connected and spin},\\
 \left\langle c_{r_i}(Q_i)^2,[X_i]\right\rangle=1.
\end{gathered}
\end{equation}
Every $X_i$ has a finite CW decomposition with only even-dimensional cells.  In the inductive system below, the spaces $V_{i,\ell}$ will be the actual fibers of the preceding homogeneous bundle at the selected evaluation points, which makes the connecting maps canonical without choosing bases.
\end{construction}

\section{The Villadsen-type inductive system}\label{sec:inductive-system}

We construct a simple inductive limit in which the finite-stage obstruction
survives.  The coordinate summand preserves the distinguished unitary.
The twisted point-evaluation summands give simplicity, while their
Grassmannian multiplicity bundles keep the characteristic number equal
to one.

\subsection{Choice of evaluation points and intrinsic multiplicity bundles}

The spaces, bundles, and evaluation sets are chosen simultaneously.
Start with
\[
 X_1=G(1)=\Gr_2(\C^4),
 \qquad
 Q_1=\zeta_1,
 \qquad
 r_1=2,
 \qquad
 N_1=4,
\]
and put
\begin{equation}\label{eq:Yi-Ei-Ai}
 Y_i=S^4\times X_i,
 \qquad
 E_i=\thetaC^2\oplus Q_i,
 \qquad
 A_i=\Gamma(\End E_i)
\end{equation}
whenever the stage-$i$ data have been defined.  Equip each $Y_i$ with a compatible metric.  For $j\leq i$, let
\[
 \alpha_{j,i}:Y_i\longrightarrow Y_j
\]
be the coordinate projection, which preserves the $S^4$ coordinate and discards the Grassmannian factors added after stage $j$.

Suppose that the data have been constructed through stage $i$.  Choose a finite set
\begin{equation}\label{eq:Fi}
 F_i=\{z_{i,1},\ldots,z_{i,m_i}\}\subseteq Y_i
\end{equation}
so that, for every $1\leq j\leq i$, the set
\begin{equation}\label{eq:dense-projections}
 \alpha_{j,i}(F_i)
\end{equation}
is $1/i$-dense in $Y_j$.  To obtain such a set, choose a finite
$1/i$-net in each $Y_j$, lift its points to $Y_i$ using surjectivity
of the coordinate projection, and take the union of the lifts.

For each selected point, retain the actual Hermitian fiber
\begin{equation}\label{eq:Viell}
 V_{i,\ell}:=E_i|_{z_{i,\ell}}.
\end{equation}
This vector space has dimension $N_i$.  Set
\[
 d_i=m_iN_i,
 \qquad
 G_i=G(d_i),
 \qquad
 \zeta_i=\zeta_{d_i}\longrightarrow G_i,
\]
and define
\begin{align}
 X_{i+1}&=X_i\times G_i,\label{eq:Xi-system}\\
 Q_{i+1}&=\pr_{X_i}^*Q_i
 \oplus
 \bigoplus_{\ell=1}^{m_i}
 \left(V_{i,\ell}\otimes\pr_{G_i}^*\zeta_i\right).
 \label{eq:Qi-system}
\end{align}
By \cref{prop:recursive-bundles}, all the properties in \eqref{eq:all-recursive-properties} continue to hold, and each $X_i$ has only even-dimensional cells.

We have $Y_{i+1}=Y_i\times G_i$, and we write
$\alpha_i:Y_{i+1}\to Y_i$ and $\beta_i:Y_{i+1}\to G_i$
for the coordinate projections.  By the definition \eqref{eq:Qi-system}, there is a bundle decomposition
\begin{equation}\label{eq:E-next-intrinsic}
 E_{i+1}
 =
 \alpha_i^*E_i
 \oplus
 \bigoplus_{\ell=1}^{m_i}
 \left(V_{i,\ell}\otimes\beta_i^*\zeta_i\right).
\end{equation}
The $\ell$th point-evaluation block uses the actual fiber
$E_i|_{z_{i,\ell}}$, so no identification
$V_{i,\ell}\cong\C^{N_i}$ is needed.

\subsection{The connecting maps}

Using \eqref{eq:E-next-intrinsic}, define a unital $*$-homomorphism
\begin{equation}\label{eq:phi-i}
 \phi_i:A_i\longrightarrow A_{i+1}
\end{equation}
by
\begin{equation}\label{eq:phi-i-formula}
 \phi_i(a)
 =
 \alpha_i^*a
 \oplus
 \bigoplus_{\ell=1}^{m_i}
 \bigl(a(z_{i,\ell})\otimes1_{\beta_i^*\zeta_i}\bigr).
\end{equation}
The coordinate summand $\alpha_i^*a$ makes $\phi_i$ injective.

\begin{remark}\label{rem:not-ordinary-diagonal}
The point-evaluation blocks of the distinguished unitary are homotopic
to identity automorphisms, but their underlying bundles are not
thereby trivialized.  Let $C\in U(V_{i,\ell})$, and choose a path
$C_t$ from $C$ to $1_{V_{i,\ell}}$.  Tensoring gives a path
\[
 C_t\otimes1_{\beta_i^*\zeta_i}
\]
on the fixed bundle $V_{i,\ell}\otimes\beta_i^*\zeta_i$, ending at
its identity automorphism.  This is the path used in
\cref{lem:wi-propagates}.  In contrast, the null-homotopy of
$u\oplus1_2$ in \cref{lem:unstable-seed} takes place on a globally
trivial rank-four bundle.  It gives no null-homotopy of
$u\oplus1_{Q_i}$ merely from the fact that the second block is an
identity automorphism.

The point-evaluation multiplicity projection in
\eqref{eq:phi-i-formula} is the tautological projection defining
$\zeta_i$, not a constant projection.  The formula is diagonal
only in local bundle frames.  Our system is therefore twisted,
or generalized diagonal, rather than diagonal between trivial
full matrix bundles in the sense of \cite{ElliottHoToms09}.
The Elliott--Ho--Toms stable-rank-one theorem for simple unital
diagonal AH limits does not apply.  Replacing the tautological
multiplicity bundles by trivial ones would remove the Chern
number on which the trapped class depends.
\end{remark}

Let
\begin{equation}\label{eq:wi}
 w_i=(u\circ\pr_{S^4})\oplus1_{Q_i}\in\U(A_i).
\end{equation}
By \eqref{eq:all-recursive-properties} and \cref{thm:finite-stage},
\begin{equation}\label{eq:wi-nontrivial}
 w_i\notin\Uzero(A_i)
 \qquad(i\geq1).
\end{equation}

\begin{lemma}\label{lem:wi-propagates}
For every $i$,
\[
 \phi_i(w_i)\sim_h w_{i+1}
 \quad\text{in }\U(A_{i+1}).
\]
Consequently, for $j>i$,
\[
 \phi_{i,j}(w_i)\sim_h w_j.
\]
\end{lemma}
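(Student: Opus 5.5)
We compute $\phi_i(w_i)$ directly from \eqref{eq:phi-i-formula} and compare it summand by summand with $w_{i+1}$ under the decomposition \eqref{eq:E-next-intrinsic}. The coordinate summand is $\alpha_i^*w_i$. Since $\alpha_i$ preserves the $S^4$ coordinate, this equals $(u\circ\pr_{S^4})\oplus1_{\alpha_i^*Q_i}$ on $\thetaC^2\oplus\alpha_i^*Q_i$. On the other hand, $w_{i+1}$ acts by $u\circ\pr_{S^4}$ on $\thetaC^2$ and by the identity on all of $Q_{i+1}=\alpha_i^*Q_i\oplus\bigoplus_\ell V_{i,\ell}\otimes\beta_i^*\zeta_i$. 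The two unitaries therefore agree on the coordinate summand. They differ only on the point-evaluation blocks, where $\phi_i(w_i)$ acts by $C_\ell\otimes1_{\beta_i^*\zeta_i}$ with $C_\ell:=w_i(z_{i,\ell})\in U(V_{i,\ell})$, while $w_{i+1}$ acts by the identity.

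The homotopy is then that of \cref{rem:not-ordinary-diagonal}. Each $U(V_{i,\ell})$ is a unitary group of a finite-dimensional Hilbert space and hence path connected. Choose a continuous path $C_{\ell,t}$ in $U(V_{i,\ell})$ from $C_\ell$ to $1_{V_{i,\ell}}$, for example the exponential path $C_{\ell,t}=\exp((1-t)h_\ell)$ with $C_\ell=\exp(h_\ell)$ and $h_\ell$ skew-adjoint. Define
\[
 U_t=\alpha_i^*w_i\oplus\bigoplus_{\ell=1}^{m_i}\bigl(C_{\ell,t}\otimes1_{\beta_i^*\zeta_i}\bigr).
\]
Each $U_t$ is a unitary section of $\End E_{i+1}$: the tensor product of a constant unitary on the fixed vector space $V_{i,\ell}$ with the identity of the bundle $\beta_i^*\zeta_i$ is a well-defined continuous unitary endomorphism of $V_{i,\ell}\otimes\beta_i^*\zeta_i$, and no trivialization of $\zeta_i$ is needed. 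The path $t\mapsto U_t$ is norm-continuous because $t\mapsto C_{\ell,t}$ is continuous in a finite-dimensional space. It runs from $U_0=\phi_i(w_i)$ to $U_1=w_{i+1}$.

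For the consequence we argue by induction on $j$. Suppose $\phi_{i,j}(w_i)\sim_h w_j$. The map $\phi_j$ is a unital $*$-homomorphism and hence continuous on unitary groups, so it carries this homotopy to one in $\U(A_{j+1})$, giving $\phi_{i,j+1}(w_i)\sim_h\phi_j(w_j)$. The first part gives $\phi_j(w_j)\sim_h w_{j+1}$, and concatenating the two paths completes the induction.

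The only delicate point is making sure the homotopy respects the bundle structure, and this is the reason for taking the $V_{i,\ell}$ to be the actual fibers. The factor $C_\ell$ acts on the constant fiber $V_{i,\ell}$ and not on the twisted factor $\zeta_i$, so the homotopy takes place entirely in the constant tensor factor. This is exactly the contrast drawn in \cref{rem:not-ordinary-diagonal}. Beyond this there is no real obstacle.
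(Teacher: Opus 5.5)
Your proof is correct and follows the paper's argument: the coordinate summand of $\phi_i(w_i)$ already agrees with $w_{i+1}$. Each point-evaluation block $w_i(z_{i,\ell})\otimes 1_{\zeta_i}$ is joined to the identity by tensoring a path in the connected group $U(V_{i,\ell})$ with $1_{\zeta_i}$, and the second claim follows by iteration; your explicit exponential path and the induction using continuity of $\phi_j$ just spell out details the paper leaves implicit.
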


\begin{proof}
On the coordinate summand $\alpha_i^*E_i$, the unitary $\phi_i(w_i)$ is
\[
 (u\circ\pr_{S^4})\oplus1_{\alpha_i^*Q_i},
\]
which agrees with the corresponding part of $w_{i+1}$.  On the point-evaluation summand indexed by $\ell$, it is
\[
 w_i(z_{i,\ell})\otimes1_{\zeta_i}.
\]
The operator $w_i(z_{i,\ell})$ is a single unitary on the Hermitian vector space $V_{i,\ell}$.  The group $U(V_{i,\ell})$ is path connected, so choose a path from $w_i(z_{i,\ell})$ to $1_{V_{i,\ell}}$.  Tensoring that path with $1_{\zeta_i}$ gives a path of unitary bundle automorphisms from the point-evaluation block to the identity.  Taking the direct sum of these paths proves the first assertion.  The second follows by iteration.
\end{proof}

\subsection{Simplicity}

We use the familiar fullness criterion for diagonal AH systems.

\begin{lemma}\label{lem:fullness-criterion}
Let $(B_i,\psi_i)$ be an inductive system of unital $C^*$-algebras with injective connecting maps.  Suppose that for every $i$ and every nonzero positive $a\in B_i$, there is $j>i$ such that $\psi_{i,j}(a)$ is full in $B_j$.  Then $\varinjlim(B_i,\psi_i)$ is simple.
\end{lemma}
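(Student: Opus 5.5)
The proof will show that every nonzero closed two-sided ideal of the limit $B=\varinjlim(B_i,\psi_i)$ contains the unit. Injectivity of the connecting maps lets us regard each $B_i$ as a $C^*$-subalgebra of $B$, with $\bigcup_i B_i$ dense. Given a nonzero closed ideal $J\subseteq B$, the first step is to find a nonzero positive element of some $B_i$ lying in $J$. The standard route is the fact that the intersections $J\cap B_i$ satisfy
\[
 J=\overline{\textstyle\bigcup_i (J\cap B_i)}.
\]
This holds because the quotient map $B\to B/J$ restricted to $B_i$ has kernel $J\cap B_i$. Hence $\operatorname{dist}(x,J)=\lim_i \|x+(J\cap B_i)\|$ for $x\in\bigcup_i B_i$, using that injective $*$-homomorphisms from $B_i/(J\cap B_i)$ are isometric. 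Since $J\neq0$, some $J\cap B_i\neq0$, and we choose a nonzero positive $a\in J\cap B_i$, for example $x^*x$ for a nonzero $x$.

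The second step applies the hypothesis to obtain $j>i$ with $\psi_{i,j}(a)$ full in $B_j$. Fullness means the closed ideal of $B_j$ generated by $\psi_{i,j}(a)$ contains $1_{B_j}$. Since $B_j$ is unital, the algebraic ideal is then already all of $B_j$: an element of the algebraic ideal lies within distance $<1$ of $1$, so it is invertible, and the algebraic ideal contains $1$.

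Concretely, this gives finitely many $x_k,y_k\in B_j$ with
\[
 \sum_k x_k\,\psi_{i,j}(a)\,y_k=1_{B_j}.
\]
In $B$, the element $\psi_{i,j}(a)$ is identified with $a\in J$. Since the connecting maps are unital, $1_{B_j}=1_B$, and so $1_B\in J$ and $J=B$. The limit is therefore simple.

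The only delicate point is the first step, namely that a nonzero ideal of an inductive limit meets some finite stage nontrivially. This is where the isometry of injective $*$-homomorphisms and the quotient-norm argument enter. The remaining steps are formal consequences of unitality and fullness.
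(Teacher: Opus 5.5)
Your proof is correct. The second half, where fullness at a finite stage forces $1$ into $J$, matches the paper. You obtain a nonzero positive element of $J$ at a finite stage by a different device.

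\emph{The paper's route.} It takes $0\neq x\in J_+$ and approximates it within $\varepsilon$ by a positive $a\in B_i$. It then invokes the standard perturbation lemma for positive elements: if $\|x-a\|<\varepsilon$, then $(a-\varepsilon)_+=dxd^*$ for some contraction $d$. Hence the nonzero cut-down $(a-\varepsilon)_+$ lies in $J\cap B_i$.

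\emph{Your route.} You instead prove the structural fact $J=\overline{\bigcup_i(J\cap B_i)}$, using that the injective map $B_i/(J\cap B_i)\to B/J$ is isometric. In fact $\operatorname{dist}(x,J)=\operatorname{dist}(x,J\cap B_i)$ for every $x\in B_i$, so the limit is unnecessary. The closing $2\varepsilon$ approximation is routine.

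\emph{Comparison.} Your approach needs only the automatic isometry of injective $*$-homomorphisms. It also yields more: every closed ideal of the limit is determined by its intersections with the finite stages. The paper's approach is a local functional-calculus argument that never passes to quotients, at the cost of citing the perturbation lemma. Either suffices here.

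Two small points. First, your invertibility argument showing the algebraic ideal contains $1$ is harmless but not needed. The closed ideal of $B$ generated by $a$ already contains the image of the closed ideal of $B_j$ generated by $\psi_{i,j}(a)$. Second, both you ("$1_{B_j}=1_B$") and the paper ("hence full in the limit") use that the connecting maps are unital. The statement leaves this implicit, but it holds for the maps $\phi_i$ to which the lemma is applied.
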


\begin{proof}
This is standard.  We recall the argument.  Let $I$ be a nonzero ideal in the limit $B$.  Choose $0\neq x\in I_+$.  Approximate $x$ closely by the image of a positive element $a\in B_i$.  After replacing $a$ by a nonzero cut-down $(a-\varepsilon)_+$, the usual perturbation lemma for positive elements shows that its image belongs to $I$.  By hypothesis, its image is full at a later stage and hence full in the limit.  Therefore $I=B$.
\end{proof}

\begin{proposition}\label{prop:limit-simple}
The inductive limit
\begin{equation}\label{eq:A-limit}
 A=\varinjlim(A_i,\phi_i)
\end{equation}
is simple.
\end{proposition}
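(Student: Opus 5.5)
We will verify the hypothesis of \cref{lem:fullness-criterion} for the system $(A_i,\phi_i)$. Each $\phi_i$ is injective because of the coordinate summand $\alpha_i^*a$, so it remains to show the following: for every nonzero positive $a\in A_i$ there is $j>i$ such that $\phi_{i,j}(a)$ is full in $A_j$. Since $A_j=\Gamma(\End E_j)$ is a homogeneous algebra over the compact space $Y_j$, we use the standard fact that a positive section $b$ is full exactly when $b(y)\neq0$ for every $y\in Y_j$. Indeed, the closed ideals of $\Gamma(\End E_j)$ correspond to closed subsets of $Y_j$, namely the sets where all elements of the ideal vanish. If $b$ vanishes nowhere, the ideal it generates has empty zero set and is therefore everything. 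This can also be seen concretely by a partition-of-unity argument using local trivializations, in which finitely many elements $x_k b x_k^*$ sum to an invertible element.

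\textbf{Step 1: locate a point where $a$ is large.} Fix $0\neq a\in (A_i)_+$. Choose $y_0\in Y_i$ with $a(y_0)\neq0$. By continuity there is $\delta>0$ with $a(y)\neq0$ for all $y$ in the open ball $B(y_0,\delta)\subseteq Y_i$.

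\textbf{Step 2: compose connecting maps and track point evaluations.} By induction from \eqref{eq:phi-i-formula}, for $j>k\geq i$ the map $\phi_{i,j}$ contains, as a direct summand on a subbundle of $E_j$, the block
\[
\phi_{i,k}(a)(z_{k,\ell})\otimes 1
\]
pulled back along the projection $Y_j\to G_k$ (and further coordinate pullbacks). This summand is the $\ell$th point-evaluation summand created at stage $k$ and then carried forward by the coordinate summands $\alpha^*$ of the later maps. Moreover, $\phi_{i,k}(a)$ contains the coordinate summand $\alpha_{i,k}^*a$. Evaluated at $z_{k,\ell}$, this is $a(\alpha_{i,k}(z_{k,\ell}))$. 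Hence if some $\ell$ satisfies $a(\alpha_{i,k}(z_{k,\ell}))\neq0$, then $\phi_{i,k+1}(a)$ has a summand equal to the constant nonzero matrix $\phi_{i,k}(a)(z_{k,\ell})\otimes 1_{\zeta_k}$ at every point of $Y_{k+1}$. Tensoring with the identity of a nonzero line or rank-two bundle preserves nonvanishing. Because the later connecting maps keep the coordinate summand $\alpha^*$, $\phi_{i,j}(a)$ is then nonzero at every point of $Y_j$ for all $j\geq k+1$.

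\textbf{Step 3: use density.} Choose $k\geq i$ with $1/k<\delta$. By \eqref{eq:dense-projections} applied with $j=i\le k$, the set $\alpha_{i,k}(F_k)$ is $1/k$-dense in $Y_i$, so some $z_{k,\ell}$ satisfies $\alpha_{i,k}(z_{k,\ell})\in B(y_0,\delta)$. By Step 1, $a(\alpha_{i,k}(z_{k,\ell}))\neq0$. Step 2 then shows that $\phi_{i,k+1}(a)$ vanishes nowhere on $Y_{k+1}$, and the fullness fact above makes it full in $A_{k+1}$. \Cref{lem:fullness-criterion} then gives simplicity of $A$.

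The main point needing care is the bookkeeping in Step 2. The point-evaluation summand lives on the twisted bundle $V_{k,\ell}\otimes\beta_k^*\zeta_k$ rather than on a trivial bundle, and we must confirm that nonvanishing is unaffected by the twist. This holds fiberwise: $T\otimes 1$ is nonzero on $V\otimes\zeta_y$ whenever $T\neq0$ and $\zeta_y\neq0$. The fullness-versus-nonvanishing fact for nontrivial bundles is standard.
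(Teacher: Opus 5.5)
Your proposal is correct and follows essentially the same route as the paper. Density of $\alpha_{i,k}(F_k)$ gives an evaluation point where the coordinate block $a(\alpha_{i,k}(z_{k,\ell}))$ is nonzero; the resulting point-evaluation summand $\phi_{i,k}(a)(z_{k,\ell})\otimes 1_{\zeta_k}$ makes $\phi_{i,k+1}(a)$ nowhere vanishing, hence full, and \cref{lem:fullness-criterion} finishes. Your explicit $\delta$-ball with $1/k<\delta$ simply makes precise the paper's ``for some $j\geq i$'' density step.
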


\begin{proof}
Fix $i$ and let $0\neq a\in(A_i)_+$.  The set
\[
 U=\{y\in Y_i:a(y)\neq0\}
\]
is a nonempty open subset of $Y_i$.  By \eqref{eq:dense-projections}, for some $j\geq i$ there is a point $z\in F_j$ such that
\[
 \alpha_{i,j}(z)\in U.
\]
The coordinate block of $\phi_{i,j}(a)(z)$ is
\[
 a(\alpha_{i,j}(z)),
\]
so $\phi_{i,j}(a)(z)\neq0$.  In the next connecting map, the point-evaluation block
\[
 \phi_{i,j}(a)(z)\otimes1_{\zeta_j}
\]
occurs in $\phi_{i,j+1}(a)$ at every point of $Y_{j+1}$.  It is nonzero in every fiber.  Ideals of a homogeneous algebra $\Gamma(\End E)$ are in bijection with open subsets of the base, and the ideal generated by a section has support equal to the set on which its fiber value is nonzero.  Thus a section is full exactly when it is nonzero in every fiber.  It follows that $\phi_{i,j+1}(a)$ is full.  Apply \cref{lem:fullness-criterion}.
\end{proof}

\subsection[The K1-group of the limit]{The \texorpdfstring{$K_1$}{K1}-group of the limit}

\begin{proposition}\label{prop:K1-zero}
For every $i$,
\[
 K_1(A_i)=0.
\]
Consequently,
\[
 K_1(A)=0.
\]
\end{proposition}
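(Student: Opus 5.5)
The plan is to compute $K_1(A_i)$ through Morita invariance and then pass to the limit by continuity of $K$-theory. Since $E_i$ is a Hermitian bundle over $Y_i=S^4\times X_i$ of constant rank $N_i$, the algebra $A_i=\Gamma(\End E_i)$ is a full corner of $M_n(C(Y_i))$ for large $n$. Indeed, embed $E_i$ as a complemented summand of a trivial bundle $\thetaC^n$, and let $p$ be the corresponding projection. Then $A_i\cong pM_n(C(Y_i))p$. The projection $p$ has nonzero value at every point of the connected space $Y_i$, so it is full. By Brown's theorem, or directly by Morita invariance of $K$-theory,
\[
 K_1(A_i)\cong K_1(C(Y_i))\cong K^{-1}(Y_i).
\]

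Next I would show $K^{-1}(Y_i)=0$. By \cref{con:recursive-data}, each $X_i$ has a finite CW decomposition with only even-dimensional cells. The sphere $S^4$ has cells in dimensions $0$ and $4$. The product decomposition of $Y_i$ therefore also has only even-dimensional cells. The odd $K$-group of such a complex vanishes, and I would prove this by induction on skeleta. Attaching a $2k$-cell gives a cofiber sequence $Y^{(2k-2)}\to Y^{(2k)}\to\bigvee S^{2k}$. By Bott periodicity, $\wt K^{-1}(S^{2k})=0$ and $\wt K^{0}(S^{2k})=\Z$. The six-term (or long) exact sequence of the pair then gives $K^{-1}(Y^{(2k)})=0$ from $K^{-1}(Y^{(2k-2)})=0$. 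The base case is a finite set of points, for which $K^{-1}=0$. An alternative route is the Atiyah--Hirzebruch spectral sequence, which is concentrated in even total degree. Either way, $K_1(A_i)=0$.

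Finally, $K_1$ commutes with inductive limits of $C^*$-algebras, so
\[
 K_1(A)\cong\varinjlim K_1(A_i)=0.
\]
No step is a real obstacle. The only point needing care is the identification $K_1(\Gamma(\End E))\cong K^{-1}(Y)$ for a nontrivial bundle $E$, and the full-corner argument above handles it. Note that triviality of $E$ is not needed, only fullness of $p$.
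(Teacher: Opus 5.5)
Your proposal is correct and follows essentially the same route as the paper: Morita equivalence gives $K_1(A_i)\cong K^1(Y_i)$, the even-cell decomposition of $Y_i=S^4\times X_i$ forces $K^1(Y_i)=0$ (via the Atiyah--Hirzebruch spectral sequence or induction over cells), and continuity of $K_1$ under inductive limits gives $K_1(A)=0$. You spell out details the paper only cites, namely the full-corner realization $A_i\cong pM_n(C(Y_i))p$ and the skeletal induction; connectedness plays no role in the fullness of $p$, which follows from its constant positive rank alone.
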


\begin{proof}
The algebra $A_i=\Gamma(\End E_i)$ is strongly Morita equivalent to $C(Y_i)$, so
\[
 K_1(A_i)\cong K^1(Y_i).
\]
By \cref{con:recursive-data}, $X_i$ has a finite CW decomposition with only even-dimensional cells, and the same is true of $Y_i=S^4\times X_i$.  The Atiyah--Hirzebruch spectral sequence, or induction over the cells, gives
\[
 K^1(Y_i)=0.
\]
Continuity of operator $K$-theory for inductive limits now yields
\[
 K_1(A)=\varinjlim K_1(A_i)=0.
\]
See \cite{AtiyahKTheory,RordamLarsenLaustsen00} for the topological and operator-algebraic facts used here.
\end{proof}

\section{Passage of the obstruction to the limit}\label{sec:main-proof}

It remains to show that the distinguished unitary cannot become
null-homotopic in the completion of the algebraic limit.  For an
injective system, a finite-stage unitary which is null-homotopic in
the limit is already null-homotopic at some later finite stage.

\begin{lemma}\label{lem:path-finite-stage}
Let
\[
 B=\varinjlim(B_i,\psi_i)
\]
be an inductive limit of unital $C^*$-algebras with injective connecting maps.  Let $v\in\U(B_i)$.  If the image of $v$ in $B$ belongs to $\Uzero(B)$, then there is $j\geq i$ such that
\[
 \psi_{i,j}(v)\in\Uzero(B_j).
\]
\end{lemma}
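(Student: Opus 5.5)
The plan is the standard approximation argument for unitaries in inductive limits: compactness of $[0,1]$ reduces the path to finitely many short steps, and short steps can be realized at a finite stage.

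Write $\psi_{i,\infty}:B_i\to B$ for the canonical maps. These are isometric because the connecting maps are injective $*$-homomorphisms.

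First, suppose $\psi_{i,\infty}(v)\in\Uzero(B)$ and choose a continuous path $(x_t)_{t\in[0,1]}$ of unitaries in $B$ with $x_0=1$ and $x_1=\psi_{i,\infty}(v)$. By uniform continuity, choose a partition $0=t_0<t_1<\cdots<t_n=1$ such that $\|x_{t_k}-x_{t_{k-1}}\|<1/4$ for each $k$.

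Second, lift the partition points to a finite stage. Since $\bigcup_j\psi_{j,\infty}(B_j)$ is dense, there are an index $j\geq i$ and elements $y_1,\ldots,y_{n-1}\in B_j$ with $\|\psi_{j,\infty}(y_k)-x_{t_k}\|<\delta$, where $\delta$ is small. Put $y_0=1$ and $y_n=\psi_{i,j}(v)$; these lie over $x_0$ and $x_1$ exactly. Because $\psi_{j,\infty}$ is isometric, each $y_k$ is within $\delta$ of being unitary in norm, so $y_k^*y_k$ and $y_ky_k^*$ are within about $3\delta$ of $1$. Hence each $y_k$ is invertible, and its polar part $v_k=\operatorname{pol}(y_k)=y_k(y_k^*y_k)^{-1/2}$ is a unitary in $B_j$ with $\|v_k-y_k\|=O(\delta)$. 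For $k=0$ and $k=n$ we already have $v_0=1$ and $v_n=\psi_{i,j}(v)$.

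Third, estimate consecutive distances using isometry again:
\[
\|v_k-v_{k-1}\|\le\|x_{t_k}-x_{t_{k-1}}\|+O(\delta)<1/2
\]
once $\delta$ is small. For unitaries with $\|v_k-v_{k-1}\|<2$, the spectrum of $v_{k-1}^*v_k$ omits $-1$. So $h_k=\log(v_{k-1}^*v_k)$, computed with the principal branch, is a self-adjoint element of $B_j$, and $s\mapsto v_{k-1}e^{sh_k}$ is a path of unitaries in $B_j$ from $v_{k-1}$ to $v_k$. Concatenating these $n$ paths joins $1$ to $\psi_{i,j}(v)$ inside $\U(B_j)$, which is the claim.

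The only step needing care is the second one. The approximants must be genuine unitaries at the finite stage, and the endpoints must be exactly $1$ and $\psi_{i,j}(v)$ rather than approximations of them. Both points are handled by the polar-part correction and by fixing $y_0$ and $y_n$ in advance. Once those are in place, the rest is routine norm bookkeeping.
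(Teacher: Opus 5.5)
Your proposal is correct and follows essentially the same route as the paper's proof. In both, you partition the path finely, approximate the finitely many partition values in one common stage $B_j$ while keeping the endpoints $1$ and $\psi_{i,j}(v)$ exact, replace the approximants by their unitary polar parts, and join consecutive unitaries by logarithmic paths; your constants $1/4$ and $1/2$ play the roles of the paper's $1/2$ and $1$. One harmless slip: the principal logarithm of a unitary is skew-adjoint, not self-adjoint, and that is exactly what makes $s\mapsto v_{k-1}e^{sh_k}$ a path of unitaries.
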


\begin{proof}
Identify the $B_i$ with nested unital subalgebras of $B$,
and let $h:[0,1]\to\U(B)$ be a path from $v$ to $1$.
Choose a partition
\[
 0=t_0<t_1<\cdots<t_n=1
\]
such that $\|h(t_{k+1})-h(t_k)\|<1/2$ for every $k$.

Approximate the finitely many values $h(t_k)$ in a common
stage $B_j$, with $j\geq i$.  Take the endpoint
approximations to be $x_0=\psi_{i,j}(v)$ and $x_n=1$
exactly.  Choose all approximations sufficiently close
that each $x_k$ is invertible and its unitary polar part
\[
 y_k=x_k(x_k^*x_k)^{-1/2}\in\U(B_j)
\]
satisfies $\|y_k-h(t_k)\|<1/4$.  This is possible by
continuity of polar decomposition near unitaries.
The endpoints are unchanged, and
\[
 \|y_{k+1}-y_k\|<1
 \qquad(0\leq k<n).
\]
In particular, $-1$ is not in the spectrum of
$y_k^*y_{k+1}$.  The continuous logarithm gives a path
\[
 t\longmapsto
 y_k\exp\bigl(t\log(y_k^*y_{k+1})\bigr),
 \qquad 0\leq t\leq1,
\]
from $y_k$ to $y_{k+1}$ in $\U(B_j)$.
Concatenating these paths joins $\psi_{i,j}(v)$ to $1$.
\end{proof}

\begin{theorem}\label{thm:main}
The algebra $A$ in \eqref{eq:A-limit} is a simple, separable, unital, infinite-dimensional, nuclear AH algebra with
\[
 K_1(A)=0
\]
which is not $K_1$-injective.  More precisely, the image
\[
 w=\phi_{1,\infty}(w_1)\in\U(A)
\]
represents a nonzero element of order two in $\U(A)/\Uzero(A)$.
\end{theorem}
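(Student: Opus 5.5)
The proof assembles the structural properties first and then the obstruction.

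\emph{Structural properties.} Simplicity is \cref{prop:limit-simple} and $K_1(A)=0$ is \cref{prop:K1-zero}. Each $A_i=\Gamma(\End E_i)$ is a homogeneous algebra over the compact metrizable space $Y_i$, hence separable, unital, nuclear and subhomogeneous. Inductive limits with unital connecting maps preserve separability, unitality and nuclearity, and $A$ is AH by construction. For infinite dimensionality, note that $\phi_i$ is injective and the fiber rank $N_i=2+r_i$ grows strictly by \eqref{eq:N-recursion}. A finite-dimensional simple limit would be a matrix algebra $M_n$, and $A_i$ would embed unitally in it for every $i$. But $A_i$ has irreducible representations of dimension $N_i\to\infty$, so this is impossible. (Alternatively, $A_1$ is already infinite dimensional and embeds in $A$ via the injective maps.)

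\emph{Nontriviality of $w$.} Suppose $w\in\Uzero(A)$. Since the connecting maps are injective, \cref{lem:path-finite-stage} gives $j\geq1$ with $\phi_{1,j}(w_1)\in\Uzero(A_j)$. By \cref{lem:wi-propagates}, $\phi_{1,j}(w_1)\sim_h w_j$, so $w_j\in\Uzero(A_j)$. This contradicts \eqref{eq:wi-nontrivial}, which rests on \cref{thm:finite-stage} applied to $(X_j,Q_j)$ using \eqref{eq:all-recursive-properties}. Hence $[w]\neq0$ in $\U(A)/\Uzero(A)$.

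\emph{Order two.} We need $w^2\in\Uzero(A)$, and it suffices to show $w_1^2\in\Uzero(A_1)$. We have $w_1^2=(u^2\circ\pr_{S^4})\oplus1_{Q_1}$. By \cref{lem:unstable-seed}(iii), $u^2$ is null-homotopic in $U(2)$ as a map $S^4\to U(2)$. Pulling back that homotopy along $\pr_{S^4}$ and taking its direct sum with $1_{Q_1}$ gives a path of unitary sections to $1_{E_1}$, because the protected summand $\thetaC^2$ is globally trivial. Applying $\phi_{1,\infty}$ yields $w^2\in\Uzero(A)$. Together with the previous step, $[w]$ has order exactly two. Combined with $K_1(A)=0$, this shows $\kappa_A$ is not injective.

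\emph{Main point to check.} The only delicate step is the passage to a finite stage, and \cref{lem:path-finite-stage} already handles it. Everything else is bookkeeping, so the proof is short given the earlier results. One detail deserves care: \cref{lem:unstable-seed}(iii) is a statement about the homotopy class of $u$ in the group $[S^4,U(2)]$, and we use it in the form ``the pointwise square $u^2$ is null-homotopic.'' These agree because the group structure on $[S^4,U(2)]$ given by pointwise multiplication coincides with the one from the cogroup structure of $S^4$ (the standard Eckmann--Hilton argument).
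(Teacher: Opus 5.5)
Your proof is correct and follows essentially the same route as the paper: the structural properties come from \cref{prop:limit-simple} and \cref{prop:K1-zero}, nontriviality of $[w]$ comes from \cref{lem:path-finite-stage}, \cref{lem:wi-propagates} and \eqref{eq:wi-nontrivial}, and order two comes from the null-homotopy of $u^2$ pulled back and summed with $1_{Q_1}$. Your primary infinite-dimensionality argument via the growing fiber ranks $N_i$ is valid but more roundabout than your parenthetical alternative, which is exactly the paper's argument; your Eckmann--Hilton remark supplies a detail the paper leaves implicit.
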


\begin{proof}
Each $A_i$ is a unital homogeneous algebra over a compact finite CW complex, and each connecting map is a unital injective $*$-homomorphism.  Thus $A$ is separable, unital, nuclear, and AH.  Since the injective image of $A_1$ is infinite-dimensional, so is $A$.  It is simple by \cref{prop:limit-simple}, and $K_1(A)=0$ by \cref{prop:K1-zero}.

Suppose, for contradiction, that
\[
 w=\phi_{1,\infty}(w_1)
\]
belongs to $\Uzero(A)$.  By \cref{lem:path-finite-stage}, there is $j\geq1$ such that
\[
 \phi_{1,j}(w_1)\in\Uzero(A_j).
\]
By \cref{lem:wi-propagates}, $\phi_{1,j}(w_1)$ is homotopic to $w_j$.  Hence $w_j\in\Uzero(A_j)$, contradicting \eqref{eq:wi-nontrivial}.  Therefore $w\notin\Uzero(A)$.  Since $K_1(A)=0$, its component lies in the kernel of
\[
 \U(A)/\Uzero(A)\longrightarrow K_1(A).
\]

Finally, the class of $u$ has order two in $[S^4,U(2)]$.  Thus $u^2$ is null-homotopic in $U(2)$, and
\[
 w_1^2=u^2\oplus1_{Q_1}
\]
is null-homotopic in $\U(A_1)$.  Its image $w^2$ is therefore null-homotopic in $A$.  The component of $w$ has order exactly two.
\end{proof}

\begin{proof}[Proof of \cref{thm:main-intro}]
This is \cref{thm:main}.
\end{proof}

\section{Further perspective}\label{sec:further-remarks}

\subsection{What the trapping construction retains}

The invariant preserves a mapped spin-bordism class rather than
the topology of the trapping locus.  Its spin structure is induced
from the ambient and normal bundles, and reduction removes the
constant-map contribution.  In \cref{prop:spin-class-formula}, the
class is the Euler number times $\eta_{\Spin}$.  The recursion
retains the spin hypotheses and the odd multiplier.

\subsection{A broader trapped-corner principle}

The argument suggests a more general use of higher-rank corner trapping.
Matrix data restrict to a degeneracy locus, and a suitable tangential
structure makes the mapped locus a bordism cycle.  A homotopy of
matrices then gives a bordism of these cycles.  To obtain an
obstruction, one must choose a theory in which the class carried by
the protected block is nonzero.

Here spin bordism both describes the geometry and detects the
class.  A natural transformation to another generalized homology
theory may give a further detector, but is not needed to construct
the trapped class.  This separation may be useful for other
obstructions.  It does not imply that every unstable class survives,
or that the resulting correspondence reflects positivity in a
bundle-order problem.

\subsection[An ancillary real K-homology interpretation]
{An ancillary real \texorpdfstring{$K$}{K}-homology interpretation}
\label{subsec:KO-interpretation}

The trapped class also has a real $K$-homology interpretation, which is
not needed for the proof.  Write $KO$ for real topological $K$-theory,
with homological grading.  For a compact space $T$, its
operator-algebraic realization is
\begin{equation}\label{eq:KO-homology-definition}
 \KO_n(T)=\KKO_n(C(T,\R),\R),
\end{equation}
where $\KKO$ denotes real Kasparov theory
\cite{Kasparov81,Blackadar98,Schick04}.  A closed spin $n$-manifold $M$ has a real,
Clifford-linear Dirac fundamental class $[D_M]\in\KO_n(M)$.  The Clifford structure
records the homological degree.  The Atiyah--Bott--Shapiro orientation
\cite{ABS64,ABP67} is the natural transformation
\begin{equation}\label{eq:ABS-transformation}
 \alpha_{\mathrm{ABS},T}:\Omega_n^{\Spin}(T)\longrightarrow\KO_n(T),
 \qquad [M,f]\longmapsto f_*[D_M].
\end{equation}
It sends spin bordisms of maps to equal $K$-homology classes.  On reduced classes,
\begin{equation}\label{eq:ancillary-KO-image}
 \alpha_{\mathrm{ABS},T}([M,f]_{\red})
 =f_*[D_M]-(c_M)_*[D_M].
\end{equation}
In \eqref{eq:KO-homology-definition}, the first pushforward is the Kasparov product
$[f^*]\otimes_{C(M,\R)}[D_M]$, and the constant-map term has the same form.

The coefficient group $\KO_1(\pt)=\KO_1(\R)$ is $\Z/2$, and the Dirac class of the
nonbounding spin circle maps to its nonzero element.  On the standard circle with
this spin structure, the spinors are periodic and the real mod-two Dirac index is one
\cite{AtiyahSinger69}.  Suspension and \cref{lem:spin-circle} therefore give an
isomorphism
\begin{equation}\label{eq:ABS-reduced-isomorphism}
 \alpha_{\mathrm{ABS},S^3}:\wt\Omega_4^{\Spin}(S^3)
 \xrightarrow{\ \cong\ }\wt\KO_4(S^3)
 \cong\KO_1(\pt)\cong\Z/2.
\end{equation}
In particular, $\eta_{\Spin}$ has nonzero image.  Applying
\eqref{eq:ancillary-KO-image} to $\nu_{X,Q}(v)$ therefore gives an
equivalent mod-two detector, without entering the construction of
the class or the proof of homotopy invariance.  The isomorphism in
\eqref{eq:ABS-reduced-isomorphism} concerns this reduced group and
degree only, and spin bordism and real $K$-homology are not being
identified in general.

The corresponding reduced complex $K$-homology group is zero:
\[
 \wt K_4(S^3)\cong K_1(\pt)=0.
\]
The real $K$-homology image therefore retains information lost in the
complex group.  It is a class associated to a mapped spin manifold,
not an assertion that $KO_1(A)$ contains a particular nonzero
element.  Our algebras are complex and have vanishing complex
$K_1$-groups.  The invariant used in the proof is $\nu_{X,Q}$,
and real $K$-homology gives another description of its obstruction.

\subsection{Regularity of the example}

The example is a simple unital nuclear AH algebra in the UCT class.
Its failure of $K_1$-injectivity rules out stable rank one and
$\mathcal Z$-stability by the results cited in the introduction.
We have not tried to optimize its trace simplex, stable rank, or
dimension growth.

\bibliographystyle{amsplain}
\bibliography{references}

\end{document}